\DeclareMathOperator{\coker}{coker}
\DeclareMathOperator{\im}{im}
\DeclareMathOperator{\Cone}{Cone}
\DeclareMathOperator{\Ker}{Ker}
\DeclareMathOperator{\End}{End}
\DeclareMathOperator{\Hom}{Hom}
\DeclareMathOperator{\Ext}{Ext}
\DeclareMathOperator{\Tor}{Tor}
\newtheorem{theorem}{Theorem}[section]
\newtheorem{lemma}[theorem]{Lemma}
\newtheorem{proposition}[theorem]{Proposition}
\newtheorem{definition}[theorem]{Definition}
\newtheorem{remark}[theorem]{Remark}
\newtheorem{example}[theorem]{Example}
\begin{document}

\title{Tilting modules  and tilting torsion pairs
\\Filtrations induced by tilting modules}


\author{Francesco Mattiello  \and
	Sergio Pavon         \and
        Alberto Tonolo 
}


%

\maketitle

\begin{abstract}
	Tilting modules, generalising the notion of progenerator, furnish equivalences between pieces of
	module categories. This paper is dedicated to study how much these pieces say about the
	whole category. We will survey the existing results in the literature, introducing also some
	new insights.
\end{abstract}

\section*{Introduction}
\label{intro}
In 1958 Morita characterised equivalences between the entire categories of left (or right) modules
over two rings. Let $A$ be an arbitrary associative ring with $1\not=0$. A left $A$-module ${}_AP$
is a \emph{progenerator} if it is projective, finitely generated and generates the category $A$-Mod
of left $A$-modules. Set $B:=\textrm{End}({}_AP)$, the covariant functor $\Hom_A(P,?)$ gives
an equivalence between $A$-Mod and $B$-Mod; moreover any equivalence between modules categories is
of this type.

The notion of tilting module has been axiomatised in 1979 by Brenner and Butler [BB], generalising
that of progenerator for modules of projective dimension 1. The various form of generalisations to
higher projective dimensions considered until today continue to follow their  approach. 

A tilting module $T$ of projective dimension $n$ naturally gives rise to $n+1$ corresponding classes of
modules in $A$-Mod and $B$-Mod, the Miyashita classes, with $n+1$ equivalences between them. These classes are
\[\text{KE}_e(T)=\{M\in A\text{-Mod}:\Ext^i_A(T,M)=0\ \forall i\not=e\}{\phantom{, \quad e=0,1,...,n.}}\]
\[
\text{KT}^e(T)=\{N\in B\text{-Mod}:\Tor_i^B(T,M)=0\ \forall i\not=e\},\quad e=0,1,...,n\]
and the $n+1$ equivalences are
\[\xymatrix@C=5pc{KE_e(T)\ar@<.2pc>[r]^-{\Ext^e_A(T,?)}&KT^e(T)\ar@<.2pc>[l]^-{\Tor_e^B(T,?)}}, \quad e=0,1,...,n.\]

In the $n=0$ case (progenerator), there is only one class on each side, and so every module is subject to the
equivalence of categories (that of Morita); for $n=1$, on each side the two Miyashita classes form torsion pairs, so every module in both $A$-Mod
and $B$-Mod can be decomposed in terms of modules in the Miyashita classes: precisely every module admits a composition series of length 2 with composition factors in the Miyashita classes.

For $n>1$, the Miyashita classes fail to decompose every module; the way to
recover a similar decomposition is the subject of this paper.

In Section \ref{sec:1}, we define classical $n$-tilting modules and Miyashita classes; we show
that they give a torsion pair for $n=1$, and hence they can be used to decompose every module;
we give an example showing that a similar decomposition does not exist for $n>1$, and characterise
those modules which can be decomposed.

In Section \ref{sec:2}, we present some previous attempts to recover the decomposition for $n>1$ as
well, by extending the Miyashita classes, due to Jensen, Madsen, Su \cite{JensenMadsenSu13} and to
Lo \cite{Lo15}. A useful tool in our analysis will be a characterisation of modules in
$\cap_{i>e}\Ker\Ext_A^i(T,?)$, $0\leq~e\leq n$,  (see Lemma~\ref{lemma:description}) obtained
generalising the characterisation of modules in $\cap_{i>0}\Ker\Ext_A^i(T,?)$ given by Bazzoni in
\cite[Lemma 3.2]{Bazzoni04}. These extensions deform in an irreversible way the Miyashita classes,
weakening their role.

In Section \ref{sec:3}, we recall some introductory notions about the derived category of an abelian
category, and about $t$-structures.

In Section \ref{sec:4}, we consider a generalisation of the notion of classical $n$-tilting modules,
to define non classical ones. In this setting,
we define the $t$-structure associated to a tilting module; we then study its interaction with the natural
$t$-structure of the derived category.

In Section \ref{sec:5}, we exploit the results of Section \ref{sec:4} to construct in the derived
category the \emph{t-tree} of a module with respect to a tilting module.
This procedure, discovered in the classical
tilting case by Fiorot, the first and the third author in \cite{FiorotMattielloTonolo16},
solves satisfactorily the decomposition problem for $n>1$: the classes used for the decomposition
intersect the module category exactly in the Miyashita classes. As a result of the work of the
previous section, we prove that this construction can be reproduced also in the non classical case.

Throughout the paper, the concrete case considered in Example \ref{ex:2tilt} introduced in Section \ref{sec:1} will be used to
illustrate the various attempts to solve the decomposition problem (see Examples~\ref{ex:JMS}, \ref{ex:ttree}).

\section{Classical $n$-tilting modules} \label{sec:1}
In 1986, Miyashita \cite{Miyashita86} and Cline, Parshall and Scott \cite{ClineParshallScott86} gave similar definitions
of a \emph{tilting module of projective dimension $n$}.

\begin{definition}[Miyashita \cite{Miyashita86}]\label{definition:classical}
	A left $A$-module $T$ is a classical $n$-tilting module, for some integer $n\geq 0$, if:
	\begin{enumerate}
		\item[$p_n$)] $T$ has a finitely generated projective resolution of length $n$, i.e. a projective resolution
			$$\xymatrix{0\ar[r]&P_n\ar[r]&\cdots\ar[r]&P_0\ar[r]&T\ar[r]&0}$$
			with the $P_i$ finitely generated;
		\item[$e_n$)] $T$ is rigid, i.e. $\Ext_A^i(T,T)=0$ for every $0<i\leq n$;
		\item[$g_n$)] the ring $A$ admits a coresolution of length $n$
			$$\xymatrix{0\ar[r]&A\ar[r]&T_0\ar[r]&\cdots\ar[r]&T_n\ar[r]&0}$$
			with the $T_i$ finitely generated direct summands of arbitrary coproducts of copies of $T$.
	\end{enumerate}
\end{definition}

In the case when $n=0$, $p_0$) says that the module is a finitely generated projective, $e_0$) is
empty and $g_0$) says that it is a generator: this is then the definition of a progenerator
module. As such, a classical $0$-tilting module $T$ induces a Morita equivalence of categories of modules, as
follows. Let $B=\End_A(T)$ be its ring of endomorphisms, which acts on the right on $T$, and consider
the category $B$-Mod of left $B$-modules. There are functors
\begin{align*}
	\Hom_A(T,?)&:\quad A\text{-Mod}\to B\text{-Mod}\\
	T\otimes_B?&:\quad B\text{-Mod}\to A\text{-Mod}
\end{align*}
which are category equivalences, with the unit and counit morphisms being those of the adjunction.
This is the motivating example for the definition of tilting modules, along with the next case.

In the case when $n=1$, we find what was originally (see Brenner and Butler, \cite{BrennerButler80})
defined as a \emph{tilting module}; we will give a brief and incomplete overview of what is known
about them.

Let $T$ be a classical 1-tilting left $A$-module, and let as before $B=\End_A(T)$ be its ring of
endomorphisms. In this case $T$ does not induce an equivalence of $A$-Mod and $B$-Mod anymore;
however, a little less can be proved, as follows.

Define the following pairs of full subcategories of $A$-Mod and $B$-Mod respectively:
\begin{align*}
	KE_0(T)&=\left\{X\in A\text{-Mod}:\;\Ext_A^1(T,X)=0\right\}\\
	KE_1(T)&=\left\{X\in B\text{-Mod}:\;\Hom_A  (T,X)=0\right\}\\[1em]
	KT^0(T)&=\left\{Y\in A\text{-Mod}:\;\Tor^B_1(T,Y)=0\right\}\\
	KT^1(T)&=\left\{Y\in B\text{-Mod}:\;T\otimes_B Y=0 \right\}.
\end{align*}

Then we have the following results.
\begin{proposition}[Brenner and Butler \cite{BrennerButler80}]\label{proposition:1tilt}
	In the setting above:
	\begin{enumerate}[label=\roman*)]
		\item $(KE_0(T), KE_1(T))$ and $(KT^1(T), KT^0(T))$ are torsion pairs respectively
			in $A$-Mod and $B$-Mod.
		\item There are equivalences of (sub)categories
			\[\xymatrix@C=5pc{KE_0(T)\ar@<.2pc>[r]^-{\Hom_A(T,?)}&KT^0(T)\ar@<.2pc>[l]^-{T\otimes_B?}}\]
			\[\xymatrix@C=5pc{KE_1(T)\ar@<.2pc>[r]^-{\Ext_A^1(T,?)}&KT^1(T)\ar@<.2pc>[l]^-{\Tor_1^B(T,?)}}.\]
	\end{enumerate}
\end{proposition}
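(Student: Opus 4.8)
The plan is to prove part (i) about torsion pairs first, then derive part (ii) as a consequence of the adjunction and some homological bookkeeping. For part (i), recall that a pair of full subcategories $(\mathcal{T},\mathcal{F})$ in a module category is a torsion pair iff $\Hom(\mathcal{T},\mathcal{F})=0$ and every module $M$ fits in a short exact sequence $0\to tM\to M\to M/tM\to 0$ with $tM\in\mathcal{T}$ and $M/tM\in\mathcal{F}$; equivalently, $\mathcal{T}=\{M:\Hom(M,\mathcal{F})=0\}$ and $\mathcal{F}=\{M:\Hom(\mathcal{T},M)=0\}$. First I would establish $\Hom_A(KE_0(T),KE_1(T))=0$ on the $A$-side: given $X\in KE_0(T)$ and $X'\in KE_1(T)$, I would use the presentation of $A$ coming from axiom $g_1$) — namely $0\to A\to T_0\to T_1\to 0$ with $T_i$ in $\mathrm{add}\,T$ — to show that any $f\colon X\to X'$ factors through a module in $\mathrm{Gen}\,T$, and $\Hom_A(\mathrm{Gen}\,T,X')=0$ because $X'\in KE_1(T)$ means $\Hom_A(T,X')=0$. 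For the approximation sequences, given any $X\in A\text{-Mod}$, one takes the trace $\mathrm{tr}_T(X)=\sum_{f\colon T\to X}\im f$; the hard verification is that $X/\mathrm{tr}_T(X)\in KE_1(T)$ and $\mathrm{tr}_T(X)\in KE_0(T)$, i.e. that $\Ext^1_A(T,\mathrm{tr}_T(X))=0$. This last point is the crux: here one uses rigidity $e_1$) together with the fact that $\mathrm{tr}_T(X)\in\mathrm{Gen}\,T$ and that for a 1-tilting module $\mathrm{Gen}\,T=\{X:\Ext^1_A(T,X)=0\}$ — this identification itself requires the projective resolution $p_1$) and a standard dimension-shifting argument with $g_1$).

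For the $B$-side torsion pair $(KT^1(T),KT^0(T))$, I would argue dually using the flat-type resolution of $T$ as a $B$-module. Note first that $T$ has projective dimension at most $1$ also as a right... — rather, since $p_1$) gives a finitely generated projective resolution of length $1$ over $A$, applying $\Hom_A(-,A)$-type considerations or more directly using that $T_B$ has a projective resolution of length $\le 1$ over $B$ (this follows from $g_1$) applied to $T$), one gets that $\Tor^B_i(T,-)=0$ for $i\ge 2$. Then for any $Y\in B\text{-Mod}$, the counit $\varepsilon_Y\colon T\otimes_B\Hom_A(T,?) \to$ — I mean one takes the canonical sequence; concretely $T\otimes_B Y$ lies in $A\text{-Mod}$ and one builds the sequence $0\to tY\to Y\to Y/tY\to 0$ where $tY=\Ker(Y\to\Hom_A(T,T\otimes_B Y))$ or equivalently the largest submodule killed by $T\otimes_B?$ up to the torsion radical; verifying $\Hom_B(KT^1(T),KT^0(T))=0$ uses that $T\otimes_B?$ is right exact and $\Tor^B_1$ controls the left-exactness defect. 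The key is that $\Tor^B_1(T,Y)=0$ is a closed-under-extensions, submodule-and-quotient-friendly condition precisely because $T$ has $B$-projective dimension $\le 1$.

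For part (ii), I would show the unit $\eta_X\colon X\to T\otimes_B\Hom_A(T,X)$ is an isomorphism exactly when $X\in KE_0(T)$ and lands in $KT^0(T)$, and dually the counit $\varepsilon_Y\colon\Hom_A(T,T\otimes_B Y)\to$ — rather the evaluation — is an isomorphism on $KT^0(T)$; the argument is the usual one: apply $\Hom_A(T,?)$ to a projective presentation and use $\Ext^{\ge 1}_A(T,-)|_{KE_0}=0$ plus $\Tor^B_{\ge 1}(T,-)|_{KT^0}=0$ to conclude the derived functors vanish and the natural transformations are isomorphisms degreewise. For the second equivalence, one uses that $\Ext^1_A(T,?)$ and $\Tor^B_1(T,?)$ are the first derived functors of an adjoint pair restricted to the torsion classes, and invokes the standard fact (Cartan–Eilenberg style) that adjoint pairs of functors pass to adjoint pairs on derived functors when the relevant higher terms vanish — which they do since projective dimensions are $\le 1$ on both sides, so $\Ext^{\ge 2}=0=\Tor^B_{\ge 2}$. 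I expect the main obstacle to be the identification $\mathrm{Gen}\,T=\Ker\Ext^1_A(T,?)=KE_0(T)$ and its $B$-side mirror, since everything — the trace being a torsion radical, the approximation sequences being the right ones, the derived functors vanishing — ultimately rests on that; once it is in hand, the torsion-pair axioms and the equivalences follow by routine adjunction calculus.
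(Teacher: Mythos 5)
The paper itself gives no proof of this proposition; it is stated as a classical result and attributed to Brenner and Butler via a citation, so there is no internal argument to compare against. Judging your sketch on its own terms: the $A$-side is essentially sound once one grants the central identification $KE_0(T)=\Gen T$, which you correctly single out as the crux. (One small misdirection: the vanishing $\Hom_A(KE_0,KE_1)=0$ does not need the $g_1)$ coresolution at all — once $X\in KE_0=\Gen T$, a surjection $T^{(I)}\twoheadrightarrow X$ composed with $f\colon X\to X'$ lands in $\Hom_A(T^{(I)},X')=0$, so $f=0$. The coresolution enters only in proving $KE_0\subseteq\Gen T$, and rigidity plus $\mathrm{pd}\,T\le 1$ give the reverse inclusion.)

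The $B$-side and part (ii) have genuine gaps. You assert that the torsion radical on a $B$-module $Y$ is $tY=\ker\bigl(Y\to\Hom_A(T,T\otimes_B Y)\bigr)$ and that this ``follows'' because $T\otimes_B?$ is right exact and $\Tor^B_1$ controls the left-exactness defect, but neither $T\otimes_B tY=0$ nor $\Tor^B_1(T,Y/tY)=0$ is a formal consequence of right-exactness; establishing that $(KT^1,KT^0)$ is a torsion pair in $B$-Mod requires real work (for instance, transporting it across the equivalences of part (ii), or arguing as Brenner--Butler do via projective presentations in $B$-Mod), and you have not supplied it. For part (ii), asserting that unit and counit are isomorphisms on the relevant classes, and that ``adjoint pairs pass to adjoint pairs on derived functors,'' is where the actual content of the theorem lives — one must check, via the short exact sequences coming from $g_1)$ and from projective presentations, that $\Hom_A(T,?)$ sends $KE_0$ into $KT^0$ and that the natural maps $X\to T\otimes_B\Hom_A(T,X)$ and $Y\to\Hom_A(T,T\otimes_B Y)$ are isomorphisms there, and dually for $\Ext^1/\Tor_1$ on $KE_1/KT^1$. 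These verifications are not ``routine adjunction calculus''; they are the theorem. As written, your proposal names the right ingredients but defers the hard steps to phrases like ``the usual one'' and ``Cartan--Eilenberg style,'' which would not survive as a complete proof.
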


This proposition shows that the 1-tilting case is slightly more complex than the 0-tilting one.
Instead  of having an equivalence of the whole categories $A$-Mod and $B$-Mod we have two pairs of
equivalent subcategories, giving a functorial decomposition of every module in its torsion and torsion free parts. 

For an arbitrary $n\geq0$, following Miyashita, we find that every classical $n$-tilting module $T$
gives rise to two sets of $n+1$ full subcategories, of $A$-Mod and
$B$-Mod respectively, defined as follows for $e=0,\dots,n$:
\begin{align*}
	KE_e(T)&=\left\{X\in A\text{-Mod}:\;\Ext_A^i(T,X)=0\;\text{for every } i\neq e\right\}&\subset A\text{-Mod}\\
	KT^e(T)&=\left\{Y\in B\text{-Mod}:\;\Tor^B_i(T,Y)=0\;\text{for every } i\neq e\right\}&\subset B\text{-Mod}
\end{align*}
where conventionally $\Ext_A^0(T,X)=\Hom_A(T,X)$ and $\Tor^B_0(T,Y)=T\otimes_BY$.
As a generalisation of point $ii)$ of Proposition \ref{proposition:1tilt}, we may state the following result.
\begin{proposition}[Miyashita {\cite[Theorem 1.16]{Miyashita86}}]\label{proposition:miyashita-equivalences}
	In the setting above, there are equivalences of (sub)categories, for every $e=0,\dots,n$:
	\[\xymatrix@C5pc{KE_e(T)\ar@<.2pc>[r]^-{\Ext_A^e(T,?)}&KT^e(T)\ar@<.2pc>[l]^-{\Tor^B_e(T,?)}}.\]
\end{proposition}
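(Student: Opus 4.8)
The plan is to lift the two functors to the derived categories $\mathcal{D}(A):=\mathcal{D}(A\text{-Mod})$ and $\mathcal{D}(B):=\mathcal{D}(B\text{-Mod})$ and to work with the adjoint pair of triangulated functors
\[
T\otimes^{\mathbf L}_B-\;\colon\;\mathcal{D}(B)\rightleftarrows\mathcal{D}(A)\;\colon\;R\Hom_A(T,-),
\]
regarding a module as a complex concentrated in degree $0$. The whole statement will follow once one knows that the counit $\varepsilon\colon T\otimes^{\mathbf L}_B R\Hom_A(T,-)\Rightarrow\mathrm{id}_{\mathcal{D}(A)}$ and the unit $\eta\colon\mathrm{id}_{\mathcal{D}(B)}\Rightarrow R\Hom_A(T,T\otimes^{\mathbf L}_B-)$ are natural isomorphisms, i.e. that the two functors are mutually quasi-inverse equivalences (in fact one only needs this on modules, but it costs nothing to prove it in general).

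First I would extract the three inputs carried by the tilting axioms. By $p_n)$, $T$ has a finite resolution by finitely generated projectives, hence is a \emph{compact} object of $\mathcal{D}(A)$; consequently $R\Hom_A(T,-)$ preserves arbitrary coproducts, and so does $T\otimes^{\mathbf L}_B-$ as a left adjoint. By $p_n)$ and $e_n)$ together, $\Ext^i_A(T,T)=0$ for every $i>0$ (for $0<i\le n$ this is rigidity, for $i>n$ it is $\mathrm{pd}_A T\le n$), so $R\Hom_A(T,T)\cong B$ concentrated in degree $0$; equivalently, $\varepsilon_T$ and $\eta_B$ are isomorphisms. Finally, $g_n)$ provides a finite coresolution $0\to A\to T_0\to\cdots\to T_n\to 0$ with $T_i\in\mathrm{add}(T)$, which realises $A$ inside the thick subcategory of $\mathcal{D}(A)$ generated by $T$; since $A$ is a compact generator of $\mathcal{D}(A)$, the module $T$ is itself a compact generator. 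Now a routine dévissage finishes this part: the full subcategory of $\mathcal{D}(A)$ of objects $X$ with $\varepsilon_X$ an isomorphism is triangulated and closed under coproducts (here compactness of $T$ is used) and contains $T$, hence is all of $\mathcal{D}(A)$; symmetrically, the subcategory of $\mathcal{D}(B)$ where $\eta$ is an isomorphism is triangulated, closed under coproducts and contains the compact generator $B$, so it is all of $\mathcal{D}(B)$. (Alternatively one can quote the derived Morita theorem directly.)

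Finally I would read off the proposition. If $X\in KE_e(T)$, then by definition $R\Hom_A(T,X)$ has cohomology only in degree $e$, namely $\Ext^e_A(T,X)$, so it is isomorphic in $\mathcal{D}(B)$ to $\Ext^e_A(T,X)$ placed in degree $e$; applying $T\otimes^{\mathbf L}_B-$ and the isomorphism $\varepsilon_X$ identifies $X$ with the complex $T\otimes^{\mathbf L}_B\Ext^e_A(T,X)$, shifted so as to sit in degree $0$, and comparing cohomology forces $\Tor^B_j(T,\Ext^e_A(T,X))=0$ for $j\ne e$ together with a natural isomorphism $\Tor^B_e(T,\Ext^e_A(T,X))\cong X$. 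Hence $\Ext^e_A(T,-)$ carries $KE_e(T)$ into $KT^e(T)$ and $\Tor^B_e(T,-)\circ\Ext^e_A(T,-)\cong\mathrm{id}_{KE_e(T)}$. Dually, if $Y\in KT^e(T)$ then $T\otimes^{\mathbf L}_B Y$ is concentrated in a single degree with value $\Tor^B_e(T,Y)$; applying $R\Hom_A(T,-)$ and $\eta_Y$ shows that $\Ext^s_A(T,\Tor^B_e(T,Y))=0$ for $s\ne e$, that $\Tor^B_e(T,-)$ carries $KT^e(T)$ into $KE_e(T)$, and that $\Ext^e_A(T,-)\circ\Tor^B_e(T,-)\cong\mathrm{id}_{KT^e(T)}$. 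This is exactly the asserted pair of quasi-inverse equivalences.

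\textbf{Main obstacle.} I expect the only genuine difficulty to be the step that $\varepsilon$ and $\eta$ are isomorphisms on the whole derived category: one must assemble the dévissage correctly (triangulated subcategory, closed under coproducts, containing the compact generator) and — the tilting-theoretic heart of the matter — deduce from $g_n)$ that $A$ lies in the thick subcategory generated by $T$. Everything after that is bookkeeping with the cohomology of the complexes above. The same computation can be carried out without derived categories, through the two hyperhomology spectral sequences relating $\Tor^B_\bullet(T,\Ext^\bullet_A(T,X))$ with $X$ and $\Ext^\bullet_A(T,\Tor^B_\bullet(T,Y))$ with $Y$, which collapse to a single row, respectively column, on the Miyashita classes; this is essentially Miyashita's original argument.
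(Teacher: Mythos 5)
The paper offers no proof of its own here: the proposition is quoted directly from Miyashita \cite[Theorem 1.16]{Miyashita86}, so there is no in-paper argument to compare against. Your proposal is correct and takes the modern derived-Morita route rather than Miyashita's original module-theoretic one. Miyashita works inside $A$-Mod and $B$-Mod, essentially through the two hyperhomology spectral sequences relating $\Tor^B_\bullet(T,\Ext^\bullet_A(T,X))$ to $X$ (and dually), which collapse to a single row or column when $X$ lies in a Miyashita class; you flag this alternative yourself at the end. Your route instead establishes once and for all that $R\Hom_A(T,-)$ and $T\otimes^{\mathbf{L}}_B-$ are quasi-inverse equivalences $\mathcal{D}(A)\simeq\mathcal{D}(B)$ --- compactness of $T$ from $p_n$), $R\Hom_A(T,T)\cong B[0]$ from $e_n$) together with finite projective dimension, generation from $g_n$), then a d\'evissage over the localizing subcategories on which the unit and counit are invertible --- and reads off the $n+1$ equivalences by observing that $KE_e$ (resp.\ $KT^e$) is precisely the class of modules whose image under $R\Hom_A(T,-)$ (resp.\ $T\otimes^{\mathbf{L}}_B-$) has cohomology concentrated in degree $e$. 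This is conceptually cleaner and buys strictly more: the derived equivalence is exactly what the paper itself invokes at the start of Section~\ref{sec:4} (citing Cline--Parshall--Scott) and exploits in Remark~\ref{remark:MiyashitaHearts}, where the identity $KE_e=A\text{-Mod}\cap\mathcal{H}_\mathcal{T}[-e]$ is the module-level shadow of your cohomology computation. The one step worth spelling out in your write-up is the d\'evissage itself: the subcategory of $\mathcal{D}(A)$ on which $\varepsilon$ is invertible is triangulated, closed under summands, and closed under coproducts (the latter because $T$ is compact and $T\otimes^{\mathbf{L}}_B-$ is a left adjoint), and it contains $T$; since $g_n$) places the compact generator $A$ inside the thick subcategory generated by $T$, this subcategory is all of $\mathcal{D}(A)$, and the dual argument for $\eta$ needs only that $B$ compactly generates $\mathcal{D}(B)$.
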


For $n\geq2$, however, the Miyashita classes do not provide a decomposition of every module,
as it used to happen for $n=1$. This is proved by the existence of simple modules (which can have only a trivial decomposition in the module category)  not
belonging to any class.

\begin{example}[{\cite[Example 2.1]{Tonolo02}} ]\label{ex:2tilt}
Let $k$ be an algebraically closed field.
Let $A$ be the $k$-algebra associated to the quiver
$\xymatrix@1{1\ar[r]^-a&2\ar[r]^-b&3}$ with the relation $b\circ a=0$. The
indecomposable projectives are $\begin{smallmatrix}1\\2\end{smallmatrix},
	\begin{smallmatrix}2\\3\end{smallmatrix},\begin{smallmatrix}3\end{smallmatrix}$,
		while the 
indecomposable injectives are $\begin{smallmatrix}1\end{smallmatrix},
\begin{smallmatrix}1\\2\end{smallmatrix},\begin{smallmatrix}2\\3\end{smallmatrix}$.
It follows that the module
$T=\begin{smallmatrix}2\\3\end{smallmatrix}\oplus\begin{smallmatrix}1\\2\end{smallmatrix}\oplus\begin{smallmatrix}1\end{smallmatrix}$
is a classical 2-tilting module: a $p_2)$ resolution is
\[\resizebox{\textwidth}{!}{\xymatrix@C1pc{
	P^\bullet\to T\to0:&0\ar[r]& {0\oplus
		0\oplus
		\begin{smallmatrix}3\end{smallmatrix}} \ar[r]&
			0 \oplus
			0 \oplus
				{\begin{smallmatrix}2\\3\end{smallmatrix}}\ar[r]&
	{\begin{smallmatrix}2\\3\end{smallmatrix}}\oplus
		{\begin{smallmatrix}1\\2\end{smallmatrix}}\oplus
		{\begin{smallmatrix}1\\2\end{smallmatrix}}\ar[r]&
	{\begin{smallmatrix}2\\3\end{smallmatrix}}\oplus
		{\begin{smallmatrix}1\\2\end{smallmatrix}}\oplus
		{\begin{smallmatrix}1\end{smallmatrix}}\ar[r]&
			0};} \]
		$T$ is a direct sum of injectives, so it is
		rigid; lastly,
$A=\begin{smallmatrix}3\end{smallmatrix}\oplus\begin{smallmatrix}2\\3\end{smallmatrix}\oplus\begin{smallmatrix}1\\2\end{smallmatrix}$
	and so a $g_2)$ co-resolution can be easily found.
We shall show that the simple module
${\begin{smallmatrix}2\end{smallmatrix}}$ does not belong to any of the
	Miyashita classes.

In order to compute the $\Ext_A^i(T,{\begin{smallmatrix}2\end{smallmatrix}})$ we apply the
contravariant functor $\Hom_A(?,{\begin{smallmatrix}2\end{smallmatrix}})$ to the
sequence $\xymatrix@1{0\ar[r]&P^\bullet\ar[r]&0}$, obtaining
	\[\resizebox{\textwidth}{!}{\xymatrix@C1pc{
	0\ar[r]&
	\Hom_A({\begin{smallmatrix}2\\3\end{smallmatrix}}\oplus{\begin{smallmatrix}1\\2\end{smallmatrix}}\oplus
		{\begin{smallmatrix}1\\2\end{smallmatrix}}, {\begin{smallmatrix}2\end{smallmatrix}})\ar[r] &
	\Hom_A(0\oplus0\oplus{\begin{smallmatrix}2\\3\end{smallmatrix}},{\begin{smallmatrix}2\end{smallmatrix}})\ar[r]&
	\Hom_A(0\oplus0\oplus{\begin{smallmatrix}3\end{smallmatrix}},{\begin{smallmatrix}2\end{smallmatrix}})\ar[r]&
	0
		}}\]
which is isomorphic to
	\[\xymatrix{
	0\ar[r]&
	\Hom_A({\begin{smallmatrix}2\\3\end{smallmatrix}},{\begin{smallmatrix}2\end{smallmatrix}})\ar[r]^-0&
	\Hom_A({\begin{smallmatrix}2\\3\end{smallmatrix}},{\begin{smallmatrix}2\end{smallmatrix}})\ar[r]^-0&
	0\ar[r]&0
		}.\]
	Hence,
	$\Hom_A(T,{\begin{smallmatrix}2\end{smallmatrix}})\simeq\Ext_A^1(T,{\begin{smallmatrix}2\end{smallmatrix}})
		\simeq\Hom_A({\begin{smallmatrix}2\\3\end{smallmatrix}},{\begin{smallmatrix}2\end{smallmatrix}})\neq0$
	as abelian groups.
\end{example}

Indeed, those modules for which the $KE_i(T)$ (resp. the $KT^i(T)$) provide a decomposition can be characterised in the
following way.

\begin{definition}
	A left $A$-module M (resp. a left $B$-module $N$) is \emph{sequentially static}
	(resp. \emph{costatic}) if for every $i\neq j\geq 0$,
	\[\Tor^B_i(T, \Ext_A^j(T, M)) = 0 \quad(\text{resp. } \Ext_B^i(T, \Tor^A_j(T, N))=0 ).\]
\end{definition}
Notice that for an $A$-module $M$ (resp. a $B$-module $N$) to be sequentially static (resp. costatic) means that
for every $e=0,\dots,n$ we have that $\Ext_A^e(T,M)$ belongs to $KT^e$ (resp. $\Tor^B_e(T,N)$ belongs to
$KE_e$).

\begin{proposition}[{\cite[Theorem 2.3]{Tonolo02}} ]
	A left $A$-module $M$ is sequentially static if and only if there exists a filtration
	\[M=M_n\geq M_{n-1}\geq M_{n-2}\geq \cdots\geq M_0\geq M_{-1}=0\]
	such that for every $i=0,\dots,n$ the quotient $M_i/M_{i-1}$ belongs to $KE_i(T)$.
	In this case, for every such $i$ we have that $M_i/M_{i-1}\simeq \Tor^B_i(T,\Ext_A^i(T,M))$.

	Dually, a left $B$-module $M$ is sequentially costatic if and only if there exists a
	filtration
	\[N=N_{-1}\geq N_0\geq N_1\geq \cdots\geq N_{n-1}\geq N_{n}=0\]
	such that for every $i=0,\dots,n$ the quotient $N_{i-1}/N_i$ belongs to $KT^i(T)$.
	In this case, for every such $i$ we have that $N_{i-1}/N_i\simeq \Ext_A^i(T,\Tor^B_i(T,N))$.
\end{proposition}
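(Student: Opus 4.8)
The plan is to prove the first (static) statement by induction on the tilting dimension $n$, and then obtain the costatic one by the evident dualisation (replacing $\Ext_A^\bullet(T,?)$ by $\Tor_\bullet^B(T,?)$ throughout, and $A$-Mod by $B$-Mod). For the forward direction, suppose $M$ is sequentially static. The natural candidate for $M_0$ is the trace in $M$ of the subcategory $KE_0(T)$, or equivalently the image of the counit $T\otimes_B\Hom_A(T,M)\to M$; I would show that $M_0$, constructed this way, lies in $KE_0(T)$ and that $M/M_0$ has tilting-homological dimension one less, so that the inductive hypothesis applies to it over the "truncated" situation. The key homological input is the hypercohomology spectral sequence $\Tor_p^B(T,\Ext_A^q(T,M))\Rightarrow H^{q-p}$ computing the (co)homology of $T\otimes^{\mathbf L}_B\mathbf R\Hom_A(T,M)$; sequential staticity says this spectral sequence is concentrated on the diagonal $p=q$, so it degenerates and one reads off both that $M_0\simeq\Tor_0^B(T,\Hom_A(T,M))=T\otimes_B\Hom_A(T,M)$ and that the higher $\Ext_A^i(T,M)$ are unaffected by passing to $M/M_0$. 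Then the filtration is produced by iterating: $M_i/M_{i-1}\simeq\Tor_i^B(T,\Ext_A^i(T,M))$ follows by tracking the shift in indices at each step.

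For the converse, assume such a filtration exists with $M_i/M_{i-1}\in KE_i(T)$. I would compute $\Ext_A^j(T,M)$ by running the long exact sequences attached to the short exact sequences $0\to M_{i-1}\to M_i\to M_i/M_{i-1}\to0$. Because $M_i/M_{i-1}\in KE_i(T)$ means $\Ext_A^j(T,M_i/M_{i-1})=0$ for $j\neq i$ and equals the Miyashita class member $\Ext_A^i(T,M_i/M_{i-1})\in KT^i(T)$ for $j=i$, an induction on $i$ shows that $\Ext_A^j(T,M_i)$ is "built" only from pieces $\Ext_A^k(T,M_k/M_{k-1})$ with $k\le i$, each of which lies in the appropriate $KT^k(T)$. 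Once one knows $\Ext_A^j(T,M)$ lies in $KT^j(T)$ for every $j$ — i.e. $\Tor_p^B(T,\Ext_A^j(T,M))=0$ for $p\neq j$ — that is exactly sequential staticity. The identification $\Ext_A^j(T,M)\simeq\Ext_A^j(T,M_j/M_{j-1})$ (up to the classes already controlled) plus the Miyashita equivalence $\Tor_j^B(T,\Ext_A^j(T,M))\simeq M_j/M_{j-1}$ then yields the claimed isomorphism.

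I expect the main obstacle to be the bookkeeping in the converse direction: showing that the various connecting maps in the iterated long exact sequences vanish or split appropriately, so that $\Ext_A^j(T,M)$ really is isomorphic to a single $KT^j(T)$-object rather than merely an extension of such objects, and hence that the off-diagonal $\Tor$'s genuinely vanish. This is where the rigidity/vanishing hypotheses on the Miyashita classes ($KE_i(T)$ members having $\Ext$ concentrated in degree $i$, $KT^i(T)$ members having $\Tor$ concentrated in degree $i$) must be used in tandem: the first controls which $\Ext_A^j(T,M_i)$ are nonzero, the second ensures no cross-terms appear when one applies $T\otimes_B^{\mathbf L}?$. A clean way to organise this is again via the spectral sequence, now run in reverse: the filtration of $M$ induces a filtration whose associated graded has $\Ext_A^\bullet(T,-)$ supported on the diagonal, forcing the hypercohomology spectral sequence of $M$ itself to degenerate and giving both the vanishing and the isomorphisms simultaneously. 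Finally, the dual statement is obtained formally, since $T$ is also a classical $n$-tilting $B^{\mathrm{op}}$-module (by Miyashita's theory) and the roles of $A$ and $B$, of $\Ext$ and $\Tor$, are symmetric.
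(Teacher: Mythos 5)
The paper does not actually prove this proposition; it is quoted from [Tonolo02, Theorem~2.3] without argument, so there is no in-text proof to compare against. Assessed on its own merits, your proposal is sound, and in fact cleaner than you fear.

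For the forward direction, the tool you correctly identify is the hypercohomology spectral sequence $E_2^{p,q}=\Tor^B_{-p}(T,\Ext_A^q(T,M))$ converging to the cohomology of $T\otimes_B^{L}R\Hom_A(T,M)$, which is $M$ concentrated in degree $0$ (the Cline--Parshall--Scott equivalence). Sequential staticity says $E_2$ is concentrated on the line $p+q=0$; every differential shifts $p+q$ by one, so the sequence degenerates, and the filtration of the abutment $M$ in degree zero has graded pieces precisely $\Tor^B_i(T,\Ext_A^i(T,M))\in KE_i$, in the required order. This gives the forward direction in one stroke; the induction-on-$n$ framing and the separate identification of $M_0$ with the counit image are unnecessary scaffolding (the latter falls out anyway, as $E_\infty^{0,0}$ is the bottom filtration step). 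For the converse, your worry about ``extensions of $KT^j$-objects rather than single objects'' is unfounded: inducting along $0\to M_{i-1}\to M_i\to M_i/M_{i-1}\to 0$, the vanishing $\Ext_A^j(T,M_i/M_{i-1})=0$ for $j\neq i$ together with the inductive hypothesis $\Ext_A^j(T,M_{i-1})=0$ for $j\geq i$ forces every relevant connecting map to be zero or an isomorphism, yielding $\Ext_A^j(T,M)\simeq\Ext_A^j(T,M_j/M_{j-1})$ exactly. Then the Miyashita equivalence puts $\Ext_A^j(T,M)\in KT^j$ (which \emph{is} sequential staticity) and recovers $\Tor^B_j(T,\Ext_A^j(T,M))\simeq M_j/M_{j-1}$. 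The dualisation is as routine as you say.
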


\begin{remark}

In Example \ref{ex:2tilt} the module ${\begin{smallmatrix}2\end{smallmatrix}}$ was not sequentially
static. Let us check that 
	\[\Tor_2^B(T,\Hom_A(T,{\begin{smallmatrix}2\end{smallmatrix}}))\neq0.\]
The ring $B=\End_A(T)$ (with multiplication the composition left to right) is the $k$-algebra associated to the quiver
$\xymatrix@1{4\ar[r]^-c&5\ar[r]^-d&6}$ with the relation $d\circ c=0$.
In detail, the idempotents are the endomorphisms of $T$ induced by the identities of its direct
summands, $e_4$ of ${\begin{smallmatrix}1\end{smallmatrix}}$, $e_5$ of
${\begin{smallmatrix}1\\2\end{smallmatrix}}$ and $e_6$ of
${\begin{smallmatrix}5\\6\end{smallmatrix}}$ respectively; and $c$ and $d$
are the endomorphisms of $T$ induced by the morphisms
${\begin{smallmatrix}1\\2\end{smallmatrix}}\to{\begin{smallmatrix}1\end{smallmatrix}}$ and
${\begin{smallmatrix}2\\3\end{smallmatrix}}\to{\begin{smallmatrix}1\\2\end{smallmatrix}}$
respectively.

In order to compute the right $B$-module structure of $T$, we notice first that as a $k$-vector
	space $T$ is generated by five elements:
$x\in{\begin{smallmatrix}2\\3\end{smallmatrix}}\setminus{\begin{smallmatrix}3\end{smallmatrix}}$ and
$y=bx\in{\begin{smallmatrix}3\end{smallmatrix}}$,
$v\in{\begin{smallmatrix}1\\2\end{smallmatrix}}\setminus{\begin{smallmatrix}2\end{smallmatrix}}$ and
$w=av\in{\begin{smallmatrix}2\end{smallmatrix}}$, and $z\in{\begin{smallmatrix}1\end{smallmatrix}}$.
If we look at how $B$ acts on the right on these elements, we see that $T$ as a right $B$-module
is isomorphic to ${\begin{smallmatrix}5\\4\end{smallmatrix}}\oplus
{\begin{smallmatrix}6\\5\end{smallmatrix}}\oplus{\begin{smallmatrix}6\end{smallmatrix}}= 
{\begin{smallmatrix}v\\z\end{smallmatrix}}\oplus{\begin{smallmatrix}x\\w\end{smallmatrix}}\oplus{\begin{smallmatrix}y\end{smallmatrix}}$.
			
To compute $\Ext_A^1(T,{\begin{smallmatrix}2\end{smallmatrix}})$, we
	consider the injective coresolution of $\begin{smallmatrix}2\end{smallmatrix}$ in $A$-Mod
	\[\xymatrix{0\ar[r]&{\begin{smallmatrix}2\end{smallmatrix}}\ar[r]&
		{\begin{smallmatrix}1\\2\end{smallmatrix}}\ar[r]&{\begin{smallmatrix}1\end{smallmatrix}}\ar[r]&0}\]
			and compute $\coker\left[
				\Hom_A(T,{\begin{smallmatrix}1\\2\end{smallmatrix}})\to
				\Hom_A(T,{\begin{smallmatrix}1\end{smallmatrix}})
					\right]$ as left $B$-modules.

The left $B$-module $\Hom_A(T,{\begin{smallmatrix}1\\2\end{smallmatrix}})$ is generated as a
	$k$-vector space by (the morphisms induced on $T$ by) two morphisms
	${\begin{smallmatrix}2\\3\end{smallmatrix}}\to{\begin{smallmatrix}1\\2\end{smallmatrix}}$
		and
	${\begin{smallmatrix}1\\2\end{smallmatrix}}\to{\begin{smallmatrix}1\\2\end{smallmatrix}}$.
	When we look at how $B$ acts on the left on these elements, we see
	that the module is isomorphic to
	${}_B({\begin{smallmatrix}5\\6\end{smallmatrix}})$.
	Similarly, it can be seen that
	$\Hom_A(T,{\begin{smallmatrix}1\end{smallmatrix}})$ as a left
	$B$-module is isomorphic to
	${\begin{smallmatrix}4\\5\end{smallmatrix}}$, hence the
	cokernel we are interested in is the simple
	${\begin{smallmatrix}4\end{smallmatrix}}$.
	To compute $\Tor^B_2(T,\begin{smallmatrix}4\end{smallmatrix})$, we now consider the presentation
\[\xymatrix{
	0\ar[r]&
	{\begin{smallmatrix}5\end{smallmatrix}}\ar[r]&
	{\begin{smallmatrix}4\\5\end{smallmatrix}}\ar[r]&
	{\begin{smallmatrix}4\end{smallmatrix}}\ar[r]&0
	}\]
where ${\begin{smallmatrix}4\\5\end{smallmatrix}}$ is a projective left
	$B$-module. It can be easily seen that
	$\Tor_2^B(T,{\begin{smallmatrix}4\end{smallmatrix}})\simeq\Tor_1^B(T,{\begin{smallmatrix}5\end{smallmatrix}})$.
		Take the injective coresolution of ${}_B{\begin{smallmatrix}5\end{smallmatrix}}$
	\[\xymatrix{0\ar[r]&
		{\begin{smallmatrix}6\end{smallmatrix}}\ar[r]&
		{\begin{smallmatrix}5\\6\end{smallmatrix}}\ar[r]&
		{\begin{smallmatrix}5\end{smallmatrix}}\ar[r]&
		0};\]
similarly to what we did to compute $\Ext_A^1(T,{\begin{smallmatrix}2\end{smallmatrix}})$, 
we can compute $\Tor^B_1(T,{\begin{smallmatrix}5\end{smallmatrix}})$ as the kernel of
$T\otimes_B{\begin{smallmatrix}6\end{smallmatrix}}\to
	T\otimes_B{\begin{smallmatrix}5\\6\end{smallmatrix}}$
as a morphism of left $A$-modules.

If we call $t$ a generator of ${\begin{smallmatrix}6\end{smallmatrix}}$, with the previous notation
for the generators of $T_B$, 
as a $k$-vector space $T\otimes_B {\begin{smallmatrix}6\end{smallmatrix}}$ is generated by 
$v\otimes t, z\otimes t, x\otimes t, w\otimes t, y\otimes t$. Since however $e_6t=t$, the only
generators of these which are not zero are $x\otimes t=xe_6\otimes t$ and $y\otimes t=ye_6\otimes t$.
If we look at the action of $A$ on the left of these elements, we deduce that $T\otimes_B {\begin{smallmatrix}6\end{smallmatrix}}$
 is isomorphic to ${\begin{smallmatrix}2\\3\end{smallmatrix}}$ as a left $A$-module.
Similarly, $T\otimes_B{\begin{smallmatrix}5\\6\end{smallmatrix}}$ turns out to be isomorphic to ${\begin{smallmatrix}2\end{smallmatrix}}$,
so in the end
	\[ \Tor^B_2(T,\Hom_A(T,{\begin{smallmatrix}2\end{smallmatrix}}))
		\simeq {\begin{smallmatrix}3\end{smallmatrix}} \neq 0. \]

\end{remark}

\section{First attempts to recover the decomposition}\label{sec:2}

In order to recover a decomposition of every module induced by a classical $n$-tilting module, different strategies has been proposed.

In \cite{JensenMadsenSu13}, Jensen, Madsen and Su suggested a solution for the $n=2$ case by enlarging the
subcategories $KE_0, KE_1, KE_2$ in the following way.
Let $\mathcal{K}_0$ be the full subcategory of cokernels  of monomorphisms from objects in $KE_2$ to objects
in $KE_0$; let $\mathcal{K}_1$ be $KE_1$; let $\mathcal{K}_2$ be the full subcategory of kernels of epimorphisms from
objects in $KE_2$ to objects in $KE_0$:
\begin{align*}
	\mathcal{K}_0 &= \left\{ \coker f:\;X_2\stackrel{f}{\hookrightarrow} X_0,\quad X_2\in KE_2, X_0\in KE_0\right\}\\
	\mathcal{K}_1 &= KE_1\\
	\mathcal{K}_2 &= \left\{ \ker g:\;X_2\stackrel{g}{\twoheadrightarrow} X_0,\quad X_2\in KE_2, X_0\in KE_0\right\}.
\end{align*}
By considering the morphisms $f:\;0\hookrightarrow X_0$ and $g:\;X_2\twoheadrightarrow 0$ we can see that $KE_i\subset
\mathcal{K}_i$ for every $i=0,1,2$, so this is indeed an enlargement.

Now, for $i=0,1,2$ let $\mathcal{E}_i$ be the extension closure of $\mathcal{K}_i$, i.e. the smallest
subcategory containing $\mathcal{K}_i$ and closed under extensions.

\begin{proposition}[{\cite[Corollary 15, Theorem 19, Lemma 24]{JensenMadsenSu13}}]\label{proposition:JMS2}
	For any left $A$-module $X$ there exists a unique filtration
	\[0=X_0\subseteq X_1\subseteq X_2\subseteq X_3=X\]
	with the quotients $X_{i+1}/X_i\in\mathcal{E}_i$ for every $i=0,1,2$. Moreover, such a
	filtration is functorial.
\end{proposition}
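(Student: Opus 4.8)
The plan is to recognise $(\mathcal{E}_0,\mathcal{E}_1,\mathcal{E}_2)$ as the refinement of a chain of two torsion pairs in $A$-Mod, and then to read off existence, uniqueness and functoriality of the filtration from the standard machinery of torsion pairs. The two torsion pairs I would use are the following. Because $T$ has projective dimension $2$ (condition $p_2$), $\Ext^i_A(T,-)=0$ for $i\geq 3$, so the long exact sequence of $\Ext^\bullet_A(T,-)$ shows that $\mathcal{T}:=\Ker\Ext^2_A(T,-)$ is closed under quotients, extensions and (as $T$ is finitely presented) coproducts; hence it is a torsion class, with torsion-free class $\mathcal{F}:=\mathcal{T}^{\perp_0}$. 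On the other hand $\Ker\Hom_A(T,-)$ is closed under submodules, products and extensions, so it is the torsion-free class of a torsion pair $(\mathcal{G},\mathcal{G}^{\perp_0})$ with $\mathcal{G}^{\perp_0}=\Ker\Hom_A(T,-)$ and $\mathcal{G}$ the smallest torsion class containing $T$; since $T$ is rigid, every quotient of a coproduct of copies of $T$ lies in $\mathcal{T}$, whence $\mathcal{G}\subseteq\mathcal{T}$. The identifications one aims to establish are
\[ \mathcal{E}_0=\mathcal{T}\cap\mathcal{G},\qquad \mathcal{E}_0*\mathcal{E}_1=\mathcal{T},\qquad \mathcal{E}_2=\mathcal{F}, \]
which force $\mathcal{E}_1=\mathcal{T}\cap\Ker\Hom_A(T,-)=KE_1$, in agreement with $\mathcal{K}_1=KE_1$.

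The easy inclusions come from feeding the defining sequences $0\to X_2\to X_0\to\coker f\to 0$ and $0\to\ker g\to X_2\to X_0\to 0$ (with $X_2\in KE_2$, $X_0\in KE_0$) into the long exact sequence of $\Ext^\bullet_A(T,-)$ and using $\Ext^{\geq 3}_A(T,-)=0$ together with $\Ext^1_A(T,X_0)=\Ext^2_A(T,X_0)=0=\Hom_A(T,X_2)=\Ext^1_A(T,X_2)$: one reads off $\Ext^2_A(T,\coker f)=0$, $\Hom_A(T,\ker g)=0$, and that $\coker f$ is a quotient of $X_0\in KE_0\subseteq\Gen(T)\subseteq\mathcal{G}$; taking extension closures yields $\mathcal{E}_0\subseteq\mathcal{T}\cap\mathcal{G}$, $\mathcal{E}_0*\mathcal{E}_1\subseteq\mathcal{T}$ and $\mathcal{E}_2\subseteq\mathcal{F}$ (here $KE_0=\bigcap_{i>0}\Ker\Ext^i_A(T,-)\subseteq\Gen(T)$ is a standard feature of classical tilting modules). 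The substantive part is the converse: every $Y\in\mathcal{F}$ should be an iterated extension of objects of $\mathcal{K}_2$, and every $X\in\mathcal{T}$ an extension of an object of $\mathcal{E}_0$ by an object of $\mathcal{E}_1=KE_1$ with the $\mathcal{E}_0$-part filtered by $\mathcal{K}_0$. This is where the full strength of the tilting hypotheses is needed: the classes $KE_0$ and $KE_2$ carry functorial approximation sequences furnished by the Miyashita equivalences of Proposition~\ref{proposition:miyashita-equivalences} and by the $\mathrm{add}(T)$-coresolution of $A$ (condition $g_2$), and peeling these off reduces a module in $\mathcal{F}$ (so with $\Hom_A(T,-)=0$), in finitely many steps, to short exact sequences $0\to\ker g\to X_2\twoheadrightarrow X_0\to 0$ with $X_2\in KE_2$, $X_0\in KE_0$; one argues dually inside $\mathcal{T}$.

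Granting these identifications we are in the classical situation of two nested torsion pairs $(\mathcal{E}_0,\mathcal{E}_1*\mathcal{E}_2)\leq(\mathcal{T},\mathcal{F})$. For $X\in A$-Mod, let $X_2$ be the $\mathcal{T}$-torsion submodule of $X$ and $X_1\subseteq X_2$ the $\mathcal{E}_0$-torsion submodule (equivalently the $\mathcal{G}$-torsion submodule of $X_2$, since $\mathcal{E}_0=\mathcal{T}\cap\mathcal{G}$ and $X_2\in\mathcal{T}$), and put $X_0=0$, $X_3=X$. Then $X_1\in\mathcal{E}_0$; the quotient $X_2/X_1$, being a $\mathcal{G}$-torsion-free module inside $\mathcal{T}$, lies in $\mathcal{T}\cap\Ker\Hom_A(T,-)=\mathcal{E}_1$; and $X_3/X_2=X/X_2\in\mathcal{F}=\mathcal{E}_2$. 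This produces the filtration. For uniqueness, note $\Hom_A(\mathcal{E}_0,\mathcal{E}_1*\mathcal{E}_2)=0$ (because $\mathcal{E}_0\subseteq\mathcal{G}$ while $\mathcal{E}_1*\mathcal{E}_2\subseteq\Ker\Hom_A(T,-)=\mathcal{G}^{\perp_0}$, and $\Hom$ vanishes across $(\mathcal{G},\mathcal{G}^{\perp_0})$) and $\Hom_A(\mathcal{E}_0*\mathcal{E}_1,\mathcal{E}_2)=\Hom_A(\mathcal{T},\mathcal{F})=0$; from these orthogonalities any filtration $0\subseteq Y_1\subseteq Y_2\subseteq X$ of the prescribed kind is forced to have $Y_1$ equal to the $\mathcal{E}_0$-torsion submodule and $Y_2$ equal to the $\mathcal{T}$-torsion submodule of $X$, i.e.\ $Y_i=X_i$. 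Functoriality is then immediate: $X\mapsto X_1$ and $X\mapsto X_2$ are subfunctors of the identity (torsion radicals), and the same orthogonalities show that any morphism $X\to X'$ carries $X_i$ into $X_i'$.

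The genuine obstacle is the converse inclusions above — turning abstract membership in the torsion class $\mathcal{T}$ (resp.\ the torsion-free class $\mathcal{F}$) into an explicit \emph{finite} filtration whose factors are the concrete cokernels (resp.\ kernels) defining $\mathcal{K}_0$ (resp.\ $\mathcal{K}_2$). This is precisely the point where all three defining conditions on $T$ intervene: finite projective dimension (so that $\Ext^{\geq 3}_A(T,-)=0$, which both produces the torsion pair $(\mathcal{T},\mathcal{F})$ and makes the peeling process terminate), rigidity (so that $\mathcal{G}\subseteq\mathcal{T}$), and the coresolution $g_2$ (to build the approximations), all mediated by the Miyashita equivalences.
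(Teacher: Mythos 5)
The paper does not actually prove this proposition: it is quoted directly from Jensen--Madsen--Su, and the text only mentions that Lo \cite{Lo15} later gave a different proof via nested torsion pairs, which is the framework the rest of Section~\ref{sec:2} adopts. Your strategy of recognising the filtration as coming from a chain of torsion pairs is therefore in the same spirit as the route the paper discusses (Proposition~\ref{proposition:Lo} together with the identification, attributed to Lo, that for $n=2$ the resulting filtration agrees with the JMS one). So the general plan is sound; the issues are in the details.

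There is a concrete error in what you label the ``easy inclusions''. You deduce $\mathcal{E}_2\subseteq\mathcal{F}:=\mathcal{T}^{\perp_0}$ from $\Hom_A(T,\ker g)=0$; but that only gives $\mathcal{E}_2\subseteq\Ker\Hom_A(T,-)=\mathcal{G}^{\perp_0}$, and since $\mathcal{G}\subseteq\mathcal{T}$ one has $\mathcal{F}=\mathcal{T}^{\perp_0}\subseteq\mathcal{G}^{\perp_0}$, \emph{not} the reverse. So your argument lands in the bigger class, not the smaller one you need. What is actually required is the stronger orthogonality $\Hom_A(Z,M)=0$ for \emph{every} $Z\in\Ker\Ext_A^2(T,-)$ and $M\in KE_2$, from which $\mathcal{K}_2\subseteq\mathcal{F}$ follows (as $\mathcal{F}$ is closed under submodules and extensions). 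That orthogonality is nontrivial: in this paper it is the content of Remark~\ref{remark:lo}, proved via the approximation sequence of Lemma~\ref{lemma:description} (or, more slickly, via the derived-category observation of Remark~\ref{rmk:review_previous} that $\Ker\Ext^2(T,-)\subseteq\mathcal{T}^{\leq 1}$ while $KE_2\subseteq\mathcal{T}^{\geq 2}$). The same gap infects your uniqueness argument, where you write $\Hom_A(\mathcal{E}_0*\mathcal{E}_1,\mathcal{E}_2)=\Hom_A(\mathcal{T},\mathcal{F})=0$: this needs $\mathcal{E}_2\subseteq\mathcal{F}$, which you have not established.

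Beyond that, the ``substantive'' converse inclusions ($\mathcal{T}\cap\mathcal{G}\subseteq\mathcal{E}_0$, $\mathcal{T}\subseteq\mathcal{E}_0*\mathcal{E}_1$, $\mathcal{F}\subseteq\mathcal{E}_2$) are the real content of the JMS theorem and you leave them as a sketch, which you do acknowledge. As it stands the argument proves existence, uniqueness and functoriality of a filtration with factors in $\mathcal{T}\cap\mathcal{G}$, $\mathcal{T}\cap\Ker\Hom_A(T,-)$ and $\mathcal{T}^{\perp_0}$, but not that these classes coincide with the extension closures $\mathcal{E}_i$ of the concretely defined $\mathcal{K}_i$, which is precisely what Jensen--Madsen--Su (and Lo, in his alternative proof) establish.
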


\begin{example} \label{ex:JMS}
	Let us apply this construction to find a decomposition of the simple module $\begin{smallmatrix}2\end{smallmatrix}$ considered in the Example \ref{ex:2tilt}.
	In a way similar to that used to study the $\Ext_A^i(T,{\begin{smallmatrix}2\end{smallmatrix}})$, 
	$i=0,1,2$, we may prove
	that ${\begin{smallmatrix}2\\3\end{smallmatrix}}$ belongs to $KE_0$ and
	${\begin{smallmatrix}3\end{smallmatrix}}$ belongs to $KE_2$. Then,
	${\begin{smallmatrix}2\end{smallmatrix}}$
	belongs to $\mathcal{K}_0\subseteq\mathcal{E}_0$, being the cokernel of the monomorphism
	${\begin{smallmatrix}3\end{smallmatrix}}\to{\begin{smallmatrix}2\\3\end{smallmatrix}}$.
	Therefore the trivial filtration $0\leq \begin{smallmatrix}2\end{smallmatrix}$ has its only
	filtration factor in the new class $\mathcal{E}_0$.
\end{example}

In \cite{Lo15}, Lo generalised this filtration to the $n>2$ case as well. After giving a different
proof of Proposition \ref{proposition:JMS2}, he introduced the following
subcategories. For a class of objects $\mathcal{S}$, denote by $\left[\mathcal{S}\right]$ the
extension closure of the full subcategory generated by quotients of objects of $\mathcal{S}$:
\[ \left[\mathcal{S}\right] = \langle \left\{X:\;\exists( S\twoheadrightarrow X)\text{ for some
}S\in \mathcal{S}\right\}\rangle_{\text{ext}}\;. \]
This subcategory is closed under quotients (\cite[Lemma 5.1]{Lo15}).
Then set, for $i=0,\dots,n$:
\begin{align*}
	\mathcal{T}_i &= \left[\Ker \Ext_A^i(T,?)\cap\dots\cap\Ker\Ext_A^n(T,?)\right]\\
	\mathcal{F}_i &= \Ker\Hom_A(\mathcal{T}_i,?) = \left\{ X:\; \Hom_A(\mathcal{T}_i,X)=0 \right\}
\end{align*}
with our usual convention that $\Ext_A^0=\Hom_A$. Define also $\mathcal{T}_{n+1}=A\text{-Mod}$ and
$\mathcal{F}_{n+1}=0$.

This provides pairs $(\mathcal{T}_i,\mathcal{F}_i)$ of full subcategories, which are torsion pairs
since the $\mathcal{T}_i$'s are closed under extensions and quotients (see Polishchuk \cite{Polishchuk07}).
The following easy proposition can then be applied to these torsion pairs.

\begin{proposition}[{\cite[ Theorem 5.3]{Lo15}}]\label{proposition:Lo}
		Let $(\mathcal{T}_i,\mathcal{F}_i)$ be torsion pairs in $A$-Mod, for
		$i=0,\dots,n+1$, such that
		\[ 0=\mathcal{T}_0\subseteq\mathcal{T}_1\subseteq\dots\subseteq\mathcal{T}_{n+1}=A\text{-Mod}. \]
		Then for every left $A$-module $X$ there exists a functorial filtration
		\[ 0=X_0\subseteq X_1\subseteq \dots\subseteq X_{n+1}=X\]
		such that $X_i\in\mathcal{T}_i$ for $i=0,\dots,n+1$ and
		$X_i/X_{i-1}\in\mathcal{T}_{i}\cap\mathcal{F}_{i-1}$ for $i=1,\dots,n+1$.
		Moreover, the $\mathcal{T}_i\cap\mathcal{F}_{i-1}$ have pairwise trivial
		intersection.
\end{proposition}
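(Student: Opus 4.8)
The plan is to build the filtration inductively, peeling off one torsion class at a time from the bottom. Given a left $A$-module $X$, since $(\mathcal{T}_1,\mathcal{F}_1)$ is a torsion pair, I would first set $X_1$ to be the torsion subobject $t_1(X)\in\mathcal{T}_1$, so that $X/X_1\in\mathcal{F}_1$. More generally, having constructed $0=X_0\subseteq X_1\subseteq\dots\subseteq X_i$ with $X_j\in\mathcal{T}_j$, I would define $X_{i+1}/X_i$ to be the torsion subobject of $X/X_i$ with respect to the torsion pair $(\mathcal{T}_{i+1},\mathcal{F}_{i+1})$; pulling back along $X\twoheadrightarrow X/X_i$ gives the submodule $X_{i+1}\subseteq X$. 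By construction $X_{i+1}/X_i\in\mathcal{T}_{i+1}$ and $(X/X_i)/(X_{i+1}/X_i)\cong X/X_{i+1}\in\mathcal{F}_{i+1}$. Since $\mathcal{T}_{n+1}=A\text{-Mod}$, after $n+1$ steps we reach $X_{n+1}=X$, and functoriality of each step (the torsion radical is a functor) makes the whole filtration functorial.

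The points that need a short argument are: (i) $X_i\in\mathcal{T}_i$ for all $i$, and (ii) $X_{i+1}/X_i\in\mathcal{F}_i$ (not merely in $\mathcal{F}_{i+1}$). For (i), I would argue by induction: $X_{i+1}$ is an extension of $X_{i+1}/X_i\in\mathcal{T}_{i+1}$ by $X_i\in\mathcal{T}_i\subseteq\mathcal{T}_{i+1}$, and $\mathcal{T}_{i+1}$ is closed under extensions (being a torsion class), so $X_{i+1}\in\mathcal{T}_{i+1}$. For (ii), note $X/X_i\in\mathcal{F}_i$ by the previous step, and $X_{i+1}/X_i$ is a submodule of $X/X_i$; since torsion-free classes are closed under submodules, $X_{i+1}/X_i\in\mathcal{F}_i$. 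Combined with $X_{i+1}/X_i\in\mathcal{T}_{i+1}$ this gives the asserted membership in $\mathcal{T}_{i+1}\cap\mathcal{F}_i$.

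For the final claim — that the classes $\mathcal{T}_i\cap\mathcal{F}_{i-1}$ have pairwise trivial intersection — suppose $i<j$ and take $M\in(\mathcal{T}_i\cap\mathcal{F}_{i-1})\cap(\mathcal{T}_j\cap\mathcal{F}_{j-1})$. Then $M\in\mathcal{T}_i\subseteq\mathcal{T}_{j-1}$ (using the chain of inclusions and $i\le j-1$), while also $M\in\mathcal{F}_{j-1}$; but for any torsion pair $\mathcal{T}_{j-1}\cap\mathcal{F}_{j-1}=0$, so $M=0$.

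The main obstacle, such as it is, is purely bookkeeping: keeping the two indices $i+1$ and $i$ straight when asserting $X_{i+1}/X_i\in\mathcal{T}_{i+1}\cap\mathcal{F}_i$, since the torsion-free half comes from the coarser pair of the \emph{previous} stage while the torsion half comes from the \emph{current} stage. There is no deep difficulty here — as the paper itself notes, this is an ``easy proposition'' — and the only genuine inputs are the standard facts that a torsion class is closed under extensions and quotients while a torsion-free class is closed under submodules, together with the hypothesised chain $0=\mathcal{T}_0\subseteq\dots\subseteq\mathcal{T}_{n+1}=A\text{-Mod}$.
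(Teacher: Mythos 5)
Your proof is correct: the inductive peeling-off via torsion radicals, the closure of torsion classes under extensions to get $X_i\in\mathcal{T}_i$, the closure of torsion-free classes under subobjects to get $X_i/X_{i-1}\in\mathcal{F}_{i-1}$, and the chain argument for pairwise triviality are all in order, and functoriality follows from the functoriality of each torsion radical. The paper itself cites this result to Lo without reproducing a proof, and your construction is precisely the natural one (and the one Lo uses).
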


We now prove that the subcategories $\mathcal{T}_{i}\cap\mathcal{F}_{i-1}$ are indeed enlargments of
the Miyashita classes using the following  generalisation of \cite[Lemma 3.2]{Bazzoni04}, which we
find of independent interest.

\begin{lemma}\label{lemma:description}
	Let $X$ be a module belonging to $\cap_{i>e}\Ker\Ext_A^i(T,X)$ for some $0\leq~e\leq
	n$. Then, there exists a sequence of direct summands of coproducts of copies of $T$,
	\[\xymatrix{\cdots\ar[r]&T_{-1}\ar[r]^-{d_{-1}}&T_0\ar[r]^-{d_0}&\cdots\ar[r]&T_e\ar[r]&0}\]
	exact everywhere except for degree 0, and having $\ker d_0/\im d_{-1}\simeq X$.
	In particular, for $e=n$, $\cap_{i>n}\Ker\Ext_A^i(T,X)=A\text{-Mod}$ and hence $X$ may be any module.
\end{lemma}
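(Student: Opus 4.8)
The plan is to build the required complex by a careful, degree-by-degree construction, using the Miyashita hypotheses $g_n)$ on $T$ (giving a coresolution of $A$ by $\Gen$-type objects) and $p_n)$ (finite projective dimension), together with the vanishing assumption $\Ext_A^i(T,X)=0$ for all $i>e$. The positive-degree part (degrees $1$ through $e$) should come from resolving $X$ on the right by $\mathrm{Add}\,T$-objects, i.e. from an exact sequence $0\to X\to T^0\to T^1\to\cdots$; the point of the hypothesis is that since $T$ has projective dimension $n$, this $\mathrm{Add}\,T$-coresolution can be taken to ``stop'' contributing new cohomology after $e$ steps, so that after truncating at degree $e$ the only homology left is at the appropriate spot. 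The non-positive part (degrees $\le 0$) should be obtained dually, by writing $X$ as a quotient — more precisely, by repeatedly taking (special) $\Gen(T)$-precovers, or by splicing together copies of the $g_n)$-coresolution of $A$ tensored up, to get an exact $\cdots\to T_{-1}\to T_0\to X\to 0$ up to the homology at degree $0$. Splicing the two halves at degree $0$ then yields a complex of direct summands of coproducts of copies of $T$ which is exact everywhere except degree $0$, with $\ker d_0/\im d_{-1}\simeq X$.

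Concretely, first I would recall (or re-prove in this generality) Bazzoni's argument from \cite[Lemma 3.2]{Bazzoni04}, which handles the case $e=0$: there one shows that a module $X\in\bigcap_{i>0}\Ker\Ext_A^i(T,?)$ admits a left $\mathrm{Add}\,T$-resolution $\cdots\to T_{-1}\to T_0\to X\to 0$ that stays exact after applying $\Hom_A(T,-)$, built by iterating $g_n)$-type coresolutions of the syzygies. The new input needed for general $e$ is the ``top'' of the complex: one uses that $\Ext_A^i(T,X)=0$ for $i>e$ to produce, via the finite projective resolution $P^\bullet$ of $T$ from $p_n)$, an exact sequence $0\to X\to T^0\to\cdots\to T^{e-1}\to C\to 0$ with all $T^j\in\mathrm{Add}\,T$ and with $C$ still in $\bigcap_{i>0}\Ker\Ext_A^i(T,?)$ (a dimension-shift argument: each step kills the lowest surviving Ext and raises the index by one, and after $e$ steps everything above degree $0$ has vanished). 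Applying the $e=0$ case to $C$ gives the negative tail; grafting produces the complex, with the unique homology object $\ker d_0/\im d_{-1}$ identified as $X$ by construction.

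For the last sentence: when $e=n$, the hypothesis $X\in\bigcap_{i>n}\Ker\Ext_A^i(T,?)$ is vacuous because $T$ has projective dimension $n$, so $\Ext_A^i(T,-)=0$ for all $i>n$ identically; hence $\bigcap_{i>n}\Ker\Ext_A^i(T,?)=A\text{-Mod}$, and the construction applies to every module — this needs only the observation that the projective resolution in $p_n)$ has length $n$.

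The main obstacle I expect is the ``grafting'' step and the bookkeeping that the objects produced in the positive degrees genuinely lie in $\mathrm{Add}\,T$ (equivalently, are direct summands of coproducts of copies of $T$) and not merely in some larger $\Gen(T)$ — Bazzoni's original lemma is stated for the negative tail, and pushing the same control up through $e$ steps on the right requires care that each cokernel $C$ in the induction remains in the class to which the $e=0$ argument applies, and that the exactness ``except at degree $0$'' is not spoiled where the two halves are joined. A secondary, more routine point is verifying the vanishing of the relevant $\Ext$-groups at each inductive step so that the $\mathrm{Add}\,T$-coresolution of $X$ really terminates (up to homology) after exactly $e$ terms.
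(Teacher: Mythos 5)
Your overall strategy—splice together a negative and a nonnegative half, with Bazzoni's Lemma 3.2 as the $e=0$ input—is in the right spirit, and your reduction of the $e=n$ case to the vanishing of $\Ext^i_A(T,-)$ for $i>n$ is correct. But there is a genuine gap in how you construct and graft the two halves, and the missing ingredient is precisely the one the paper leans on: the completeness of the cotorsion pair $({}^{\bot}(T^{\bot_\infty}),\,T^{\bot_\infty})$, where $T^{\bot_\infty}=\bigcap_{i>0}\Ker\Ext^i_A(T,-)$.

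Your plan is to coresolve $X$ itself on the right by $\mathrm{Add}\,T$-objects, $0\to X\to T^0\to\cdots$, and then attach a Bazzoni-type resolution on the left. Two things go wrong. First, such a coresolution of $X$ need not exist: an arbitrary module in $\bigcap_{i>e}\Ker\Ext^i_A(T,-)$ (for $e=n$ this is \emph{every} module) has no reason to embed into a direct summand of a coproduct of copies of $T$, so the very first step $X\hookrightarrow T^0$ can fail. Second, even granting such a coresolution, the splice would place $X=\ker(T^0\to T^1)$ as $\ker d_0$; for the complex to be exact in negative degrees and still have $\ker d_0/\im d_{-1}\simeq X$, you would be forced into $\im d_{-1}=0$, which destroys exactness at $T_{-1}$. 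The homology cannot be created by coresolving $X$ directly.

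What the paper does instead is to first replace $X$ by a special ${}^{\bot}(T^{\bot_\infty})$-precover, giving a short exact sequence $0\to J\to K\to X\to 0$ with $K\in{}^{\bot}(T^{\bot_\infty})$ and $J\in T^{\bot_\infty}$. Then one coresolves $K$ (not $X$) on the right: $K$ admits a \emph{finite} $\mathrm{Add}\,T$-coresolution by \cite[Proposition 5.1.9]{GoebelTrlifaj}, since it lies in ${}^{\bot}(T^{\bot_\infty})$; and one resolves $J$ (not a cokernel $C$) on the left via Bazzoni's Lemma 3.2, since $J\in T^{\bot_\infty}$. Splicing via the composite $T_{-1}\twoheadrightarrow J\hookrightarrow K\hookrightarrow T_0$ gives $\ker d_0=K$ and $\im d_{-1}=J$, hence $\ker d_0/\im d_{-1}\simeq K/J\simeq X$ as required. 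Finally, the truncation to length $e$ is obtained from the observation that $\Ext^j_A(T,K)\cong\Ext^j_A(T,X)$ for $j>0$ (because $\Ext^j_A(T,J)=0$), so when $\Ext^j_A(T,X)=0$ for $j>e$ the short exact sequence $0\to K_{n-1}\to T_{n-1}\to T_n\to 0$ splits and the tail collapses. Your dimension-shifting idea is the right tool for this last step, but it must be applied to $K$, and only after $K$ has been produced by the cotorsion-pair argument.
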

\begin{proof}Set $T^{\bot_\infty}:=\cap_{i>0} \Ker \Ext^i(T,?)$ and, for a family of
modules $\mathcal{S}$, ${}^\bot\mathcal{S}: = \Ker \Ext^1(?,\mathcal{S})$. It is
well known (see \cite{GoebelTrlifaj}, after Definition 5.1.1) that the pair of
subcategories $({}^{\bot}(T^{\bot_\infty}),T^{\bot_\infty})$ is a complete hereditary
cotorsion pair. This means (see \cite[Lemma 2.2.6]{GoebelTrlifaj}) that $X$ (as any other module) admits a special
	${}^\bot(T^{\bot_\infty})$-precover
	\[ \xymatrix{0\ar[r]&J\ar[r]&K\ar[r]&X\ar[r]&0}. \]
In particular, $J$ belongs to
$({}^\bot(T^{\bot_\infty}))^\bot$, which equals $T^{\bot_\infty}$ by definition of cotorsion pair.
	Now we can apply \cite[Lemma 3.2]{Bazzoni04} to $J$ and \cite[Proposition 5.1.9]{GoebelTrlifaj}
	to $K$ in order to construct a sequence of direct summands of coproducts of copies of $T$
	\[\xymatrix{
		\cdots\ar[r]& T_{-2}
		\ar[r]&T_{-1}\ar@{->>}[d]\ar@{..>}[r]^-{d_{-1}}&T_0\ar[r]^-{d_0}&\cdots\ar[r]&T_n\ar[r]&0&(\ast)\\
		&& J \ar@{^{(}->}[r]& K \ar@{^{(}->}[u]\ar@{->>}[r]^-\pi&X}
	\]
By construction the first row is a sequence which is exact everywhere except for degree 0,
	where $\ker d_0/\im d_{-1} \simeq K/J \simeq X$.

	This concludes the proof for the case where $e=n$. Otherwise,
	it can be easily proved that since by hypothesis $\Ext_A^i(T,X)=0$ for $i>e$, then for these
	indices $T_i=0$: let us show it for $i=n$, then the other cases follow similarly.
First, notice that since $\Ext_A^j(T,J)=0$, one gets $\Ext_A^j(T,K)=\Ext_A^j(T,X)$ for every $j>0$ .
	Then, if we call $K_j = \ker d_j$ for $j\geq0$ (and so $K_0=K$), we have
\[\Ext^1_A(T,K_{n-1})\cong  \Ext^n_A(T,K_{0})=\Ext^n_A(T,X)=0;\]
applying the functor $\Hom_A(T,?)$ to the short exact sequence
\[0\to K_{n-1}\to T_{n-1}\to K_n=T_n\to 0\]
we get that $\Hom_A(T, T_{n-1})\to\Hom_A(T,T_n)$ is an epimorphism and hence all morphisms $T\to T_n$ factorise through $T_{n-1}$. Using
	the universal property of the coproduct of which $T_n$ is a direct summand, it is easy to
	prove that this implies that $0\to K_{n-1}\to T_{n-1}\to T_n\to 0$ splits.
%
%
Thus $K_{n-1}$ is a direct summand of a
	coproduct of copies of $T$. Therefore, we may truncate the sequence $(\ast)$ as
	\[\xymatrix@C1pc{\cdots\ar[r]&T_{n-3}\ar[r]&T_{n-2}\ar[r]&K_{n-1}\ar[r]&0}.\]
\end{proof}

	Notice that this lemma generalises \cite[Lemma 3.2]{Bazzoni04}, which is the case where
	$e=0$.

\begin{remark}\label{remark:lo}
We shall prove that $KE_e \subseteq \mathcal{T}_{e+1}\cap\mathcal{F}_e$ for every $e=0,\dots,n$.
 Indeed, it is obvious that
$KE_e\subseteq\mathcal{T}_{e+1}$. To see that any $M\in KE_e$ belongs to $\mathcal{F}_e$ as well,
we will proceed in subsequent steps.

First, we prove that for every $X\in\cap_{i>e-1} \Ker\Ext_A^i(T,?)\subseteq\mathcal{T}_e$ there are
no non zero morphisms $X\to M$. Indeed, if $e=0$ then $X=0$; if $e>0$ consider the sequence 
\[T^\bullet:=\xymatrix{\cdots\ar[r]&T_{-1}\ar[r]^-{d_{-1}}&T_0\ar[r]^-{d_0}&\cdots\ar[r]&T_{e-1}\ar[r]&0}\]
given by Lemma
\ref{lemma:description} applied to $X$. Set $K_j = \ker d_j$ for $j\geq0$ (and so $K_0=K$), applying the functor $\Hom(-,M)$ to the epimorphism $K_0\to X$, one gets
\[\Hom_A(X,M)\hookrightarrow \Hom_A(K_0, M) \cong \Ext^1_A(K_1,M)\cong\cdots\]
\[\cdots \cong \Ext^{e-1}_A(K_{e-1},M)=
\Ext^{e-1}_A(T_{e-1},M)=0,\]
and hence $\Hom_A(X,M)=0$.
%
%
%
%
%

Now, if $X'$ is the epimorphic image of some $X\in\cap_{i>e-1}\Ker\Ext_A^i(T,?)$,
	we have $\Hom_A(X',M)\hookrightarrow\Hom_A(X,M)=0$ so $\Hom_A(X',M)=0$ as well.
	Lastly, if $X''$ is an extension of such epimorphic images, we still find that
	$\Hom_A(X'',M)=0$.
	
	This proves the claim that $M$ has no non zero morphisms from objects of
	$\mathcal{T}_e$, and therefore it belongs to $\mathcal{F}_e$.
\end{remark}

The last result of \cite{Lo15} is the proof that for $n=2$ the filtration procedure of Proposition
\ref{proposition:Lo} reduces to that provided
by Jensen, Madsen and Su.

It should be noted that these results, while providing a way to generalise the decomposition of
every module found in the $n=1$ case, do so by introducing enlargements of the Miyashita classes
$KE_i$ which are not very natural, at the point that the connection to the tilting object they
originate from seems a bit weak.

The rest of the article is devoted to the description of an alternative approach to this enlarging
strategy, introduced in \cite{FiorotMattielloTonolo16}, which takes
place in the derived category $\mathcal{D}(A)$ of $A$-Mod.
In the following section we recall some basic facts about
derived categories and $t$-structures.

\section{Introducing derived categories and $t$-structures}\label{sec:3}

Given an abelian category $\mathcal{A}$, one may construct its derived category $\mathcal{D}(\mathcal{A})$ defining
objects and morphisms in the following way. As objects, one takes the cochain complexes with terms
in $\mathcal{A}$:
\[ \xymatrix{\cdots\ar[r]&X^n\ar[r]^-{d_X^n}&X^{n+1}\ar[r]^-{d_X^{n+1}}&X^{n+2}\ar[r]&\cdots} \]
In order to define morphisms, one first takes the quotient of morphisms of
complexes modulo those satisfying the \emph{nullohomotopy} condition; the category having these
equivalence classes as morphisms is called the \emph{homotopy category}. The step from this to the
derived category is performed by an argument of \emph{localisation}; in this way, morphisms of
complexes which induce isomorphisms on the cohomologies get an inverse in the derived category.

The category $\mathcal{D}(\mathcal{A})$ so obtained is not abelian anymore, but it is a
\emph{triangulated category}. This means that it is equipped with the following structure. First, there is
an autoequivalence, whose action on the complex $X^\bullet$ is denoted as $X^\bullet[1]$ and
is defined as follows:
\[ (X^\bullet[1])^n=X^{n+1}\qquad d_{X[1]}^n=-d_X^{n+1}. \]
This functor is called the \emph{suspension functor}; its natural definition on chain morphisms
induces a good definition on morphisms in $\mathcal{D}(\mathcal{A})$. We will sometimes denote this
functor also as $\Sigma$; its inverse as $\Sigma^{-1}$ or $?[-1]$; their powers as $\Sigma^i$ or
$?[i]$ for $i\in\mathbb{Z}$.

Given this autoequivalence, one calls \emph{triangles} the diagrams of the form
\[\xymatrix{X^\bullet\ar[r]^-u&Y^\bullet\ar[r]^-v&Z^\bullet\ar[r]^-w&X[1]}\]
such that $v\circ u=0=w\circ v$; in $\mathcal{D}(\mathcal{A})$ a particular role is played by the
triangles isomorphic (as diagrams) to those of the form
\[\xymatrix{X^\bullet\ar[r]^-f&Y^\bullet\ar[r]&\Cone f\ar[r]& X[1]}\]
where $\Cone f$ is defined as the complex having terms
$(\Cone f)^i=X^{i+1}\oplus Y^i$ and differentials
$d_{\Cone f}^i = \left[\begin{smallmatrix}-d_X^{i+1}&0\\f^{i+1}&d_Y^i\end{smallmatrix}\right]$.
These triangles are called \emph{distinguished triangles} and are the analogous of short exact
sequences in abelian categories.

In a triangulated category, hence also in $\mathcal{D}(\mathcal{A})$, products and coproducts of distinguished
triangles, when they exist, are distinguished (see \cite[Proposition 1.2.1, and its
dual]{Neeman01}). In particular, if $\mathcal{A}$ has arbitrary products or coproducts,
$\mathcal{D}(\mathcal{A})$ has them as well: they are constructed degree-wise using those of
$\mathcal{A}$.

Once we have set our context, we now define the main object which we will work with.

\begin{definition}
	Let $\mathcal{S}=(\mathcal{S}^{\leq 0},\mathcal{S}^{\geq 0})$ be a pair of full,
	strict (i.e. closed under isomorphisms) subcategories of $\mathcal{D}(\mathcal{A})$, and
	denote $\mathcal{S}^{\leq i}=\mathcal{S}^{\leq 0}[-i]$ and $\mathcal{S}^{\geq
	i}=\mathcal{S}^{\geq 0}[-i]$, for every $i\in\mathbb{Z}$.

	The pair $\mathcal{S}$ is a \emph{$t$-structure} if it satisfies the following properties:
	\begin{enumerate}[label=T\arabic*)]
		\item $\mathcal{S}^{\leq 0}\subseteq\mathcal{S}^{\leq 1}$ and
			$\mathcal{S}^{\geq 0}\supseteq\mathcal{S}^{\geq 1}$;
		\item $\Hom_{\mathcal{D}(\mathcal{A})}(\mathcal{S}^{\leq0},\mathcal{S}^{\geq1})=0$;
		\item For any complex $X^\bullet$ in $\mathcal{D}(\mathcal{A})$, there exist complexes
			$A^\bullet \in\mathcal{S}^{\leq0}$ and $B^\bullet\in\mathcal{S}^{\geq1}$
			and morphisms such that
			\[\xymatrix{A^\bullet\ar[r]&X^\bullet\ar[r]&B^\bullet\ar[r]&A^\bullet[1]}\]
			is a distinguished triangle in $\mathcal{D}(\mathcal{A})$. This is called an
			\emph{approximating triangle} of $X^\bullet$.
	\end{enumerate}
	In this case, $\mathcal{S}^{\leq 0}$ is called an \emph{aisle}, $\mathcal{S}^{\geq 0}$ a
	\emph{coaisle}.
	The $t$-structure $\mathcal{S}$ is called \emph{non degenerate} if
	$\bigcap_{i\in\mathbb{Z}}\mathcal{S}^{\leq i}=0$ (or equivalently
	$\bigcap_{i\in\mathbb{Z}}\mathcal{S}^{\geq i}=0$).
	The full subcategory $\mathcal{H}_\mathcal{S}=\mathcal{S}^{\leq
	0}\cap\mathcal{S}^{\geq 0}$ is called the \emph{heart} of $\mathcal{S}$.
\end{definition}

This definition immediately resembles that of a torsion pair in an abelian category. As it holds
for torsion pairs, the approximating triangle of a complex with respect to a $t$-structure is
functorial, as we are going to state.

Given a $t$-structure $\mathcal{S}$ in $\mathcal{D}(\mathcal{A})$, it can be proved that the embeddings of subcategories
$\mathcal{S}^{\leq0}\subseteq \mathcal{D}(\mathcal{A})$ and
$\mathcal{S}^{\geq0}\subseteq \mathcal{D}(\mathcal{A})$
have a right adjoint
$\sigma^{\leq0}:\;\mathcal{D}(\mathcal{A})\to\mathcal{S}^{\leq0}$
and a left adjoint
$\sigma^{\geq0}:\;\mathcal{D}(\mathcal{A})\to\mathcal{S}^{\geq0}$
respectively.

For $i\in\mathbb{Z}$, let us write $\sigma^{\leq i}=\Sigma^{-i}\circ\sigma^{\leq0}\circ
\Sigma^i:\;\mathcal{D}(\mathcal{A})\to\mathcal{S}^{\leq i}$ and
similarly $\sigma^{\geq i}=\Sigma^{-i}\circ\sigma^{\geq
0}\circ\Sigma^i:\;\mathcal{D}(\mathcal{A})\to\mathcal{S}^{\geq i}$;
$\sigma^{\leq i}$ and $\sigma^{\geq i}$ will be called respectively the left and the right
\emph{truncation functors at $i$} with respect to $\mathcal{S}$, for $i\in\mathbb{Z}$.

It can be proved that for every $X^\bullet$ in $\mathcal{D}(\mathcal{A})$, the
approximation triangle for $X^\bullet$ provided by the definition of the $t$-structure $\mathcal{S}$
is precisely (isomorphic to):
\[ \xymatrix{ \sigma^{\leq0}(X^\bullet)\ar[r]&X^\bullet\ar[r]&\sigma^{\geq1}
(X^\bullet)\ar[r]&(\sigma^{\leq0}(X^\bullet))[1]}.\]

The truncation functors of $\mathcal{S}$ can be used to define the \emph{$i$-th cohomology} with respect to
$\mathcal{S}$. 
It can be proved that for every $i,j\in\mathbb{Z}$ there is a canonical
natural isomorphism $\sigma^{\leq i}\sigma^{\geq j}\simeq \sigma^{\geq j}\sigma^{\leq i}$.
Then, for every $i\in\mathbb{Z}$, the functor $H_\mathcal{S}^i=\Sigma^i\sigma^{\leq i}\sigma^{\geq
i}\simeq \Sigma^i\sigma^{\geq i}\sigma^{\leq i}:\;
\mathcal{D}(\mathcal{A})\to\mathcal{H}_\mathcal{S}$ is called the $i$-th \emph{cohomology functor}
with respect to the $t$-structure $\mathcal{S}$ (or simply \emph{$\mathcal{S}$-cohomology}).

We introduce now the first $t$-structure in $\mathcal{D}(\mathcal{A})$ we are going to use.
\begin{definition}\label{definition:t-str:natural}
	The \emph{natural} $t$-structure $\mathcal{D}$ of $\mathcal{D}(\mathcal{A})$ has aisle and coaisle:
	\begin{align*}
		\mathcal{D}^{\leq0} &= \left\{X^\bullet\in\mathcal{D}(\mathcal{A}):\;
		H^i(X^\bullet)=0\text{ for every }i>0\right\}\\
		\mathcal{D}^{\geq0} &= \left\{X^\bullet\in\mathcal{D}(\mathcal{A}):\;
		H^i(X^\bullet)=0\text{ for every }i<0\right\}.
	\end{align*}
\end{definition}
Notice that by construction the $i$-th $\mathcal{D}$-cohomology of $X^\bullet$ is a complex
having zero cohomology everywhere except for degree 0, where it has $H^i(X^\bullet)$, the
usual $i$-th cohomology of $X^\bullet$: i.e., $H_\mathcal{D}^i(X^\bullet)= H^i(X^\bullet)[0]$.

The original proof that this is indeed a $t$-structure can be found in \cite{BBD82}.

We now state the following fundamental theorem about $t$-structures. One may read it
with our example $\mathcal{D}$ in mind.

\begin{theorem}\label{theorem:t-str:fundamental}
	Let $\mathcal{S}$ be a non degenerate $t$-structure in $\mathcal{D}(\mathcal{A})$. Then:
	\begin{enumerate}
		\item The heart $\mathcal{H}_\mathcal{S}$ is an abelian category; moreover, a short
			sequence
			\[\xymatrix{0\ar[r]&X^\bullet\ar[r]&Y^\bullet\ar[r]&Z^\bullet\ar[r]&0}\]
			in $\mathcal{H}_\mathcal{S}$ is exact if and only if there exists a morphism
			$Z\to X[1]$ in $\mathcal{D}(\mathcal{A})$ such that the triangle
			\[\xymatrix{X^\bullet\ar[r]&Y^\bullet\ar[r]&Z^\bullet\ar[r]&X^\bullet[1]}\]
			is distinguished.
		\item Given any distinguished triangle
			\[\xymatrix{X^\bullet\ar[r]&Y^\bullet\ar[r]&Z^\bullet\ar[r]&X^\bullet[1]}\]
			in $\mathcal{D}(\mathcal{A})$, there is a long exact sequence in
			$\mathcal{H}_\mathcal{S}$
			\[\resizebox{.9\textwidth}{!}{
				\xymatrix{\cdots\ar[r]&H_\mathcal{S}^{i-1}Z^\bullet\ar[r]&
				H_\mathcal{S}^iX^\bullet\ar[r]&H_\mathcal{S}^iY^\bullet\ar[r]&
					H_\mathcal{S}^iZ^\bullet\ar[r]&
				H_\mathcal{S}^{i+1}\ar[r]&\cdots}}\]
	\end{enumerate}
\end{theorem}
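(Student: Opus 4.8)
The plan is to derive everything from the defining properties T1)--T3) of a $t$-structure, together with the non-degeneracy hypothesis, following the classical argument of \cite{BBD82}. The backbone is the fact, recorded just before the statement, that for each $i\in\mathbb{Z}$ the inclusions $\mathcal{S}^{\leq i}\hookrightarrow\mathcal{D}(\mathcal{A})$ and $\mathcal{S}^{\geq i}\hookrightarrow\mathcal{D}(\mathcal{A})$ admit adjoints $\sigma^{\leq i}$, $\sigma^{\geq i}$, that the truncation functors commute up to natural isomorphism, and hence that the cohomology functors $H^i_{\mathcal{S}}$ are well defined with values in the heart $\mathcal{H}_{\mathcal{S}}$. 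I would use freely the standard consequences: $X^\bullet\in\mathcal{S}^{\leq 0}$ iff $H^i_{\mathcal{S}}(X^\bullet)=0$ for all $i>0$, and dually for $\mathcal{S}^{\geq 0}$ (this is where non-degeneracy enters, to guarantee that an object with all $\mathcal{S}$-cohomologies zero is itself zero).

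First I would establish the additive structure and kernels/cokernels in $\mathcal{H}_{\mathcal{S}}$. Given a morphism $f:X^\bullet\to Y^\bullet$ in $\mathcal{H}_{\mathcal{S}}$, complete it to a distinguished triangle $X^\bullet\to Y^\bullet\to C^\bullet\to X^\bullet[1]$ in $\mathcal{D}(\mathcal{A})$; applying the $\mathcal{S}$-cohomological functors to this triangle (which are cohomological by the general formalism of truncation functors) shows that $C^\bullet$ has $\mathcal{S}$-cohomology concentrated in degrees $0$ and $-1$, equivalently $C^\bullet\in\mathcal{S}^{\leq 0}\cap\mathcal{S}^{\geq -1}$. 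Then $H^{-1}_{\mathcal{S}}(C^\bullet)[0]\cong\sigma^{\geq 0}C^\bullet$ twisted appropriately, and I would check that $\ker f:=H^{-1}_{\mathcal{S}}(C^\bullet)$ and $\coker f:=H^0_{\mathcal{S}}(C^\bullet)$, with the evident maps obtained from the truncation triangle of $C^\bullet$, satisfy the universal properties of kernel and cokernel in $\mathcal{H}_{\mathcal{S}}$; this uses T2) to kill the unwanted Hom-groups when testing the universal property against an arbitrary object of the heart. The identity $\im f=\mathrm{coim}\,f$ — needed for abelianness — follows by a diagram chase among the truncation triangles of $C^\bullet$ and of $X^\bullet[1]$, again invoking the vanishing T2). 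This gives part (1), with the exactness criterion being essentially the definition just unwound: a short sequence $0\to X^\bullet\to Y^\bullet\to Z^\bullet\to 0$ in $\mathcal{H}_{\mathcal{S}}$ is exact precisely when the cone of $X^\bullet\to Y^\bullet$ is isomorphic to $Z^\bullet$ inside the heart, i.e. when it fits into a distinguished triangle as stated.

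For part (2), I would start from an arbitrary distinguished triangle $X^\bullet\to Y^\bullet\to Z^\bullet\to X^\bullet[1]$ in $\mathcal{D}(\mathcal{A})$ and apply the truncation functors. The key mechanism is the standard "octahedral" argument showing that for any triangle, applying $\sigma^{\leq i}$ and $\sigma^{\geq i+1}$ produces, by the $3\times 3$ lemma in triangulated categories, connecting morphisms $H^i_{\mathcal{S}}(Z^\bullet)\to H^{i+1}_{\mathcal{S}}(X^\bullet)$ fitting into short exact sequences in the heart; splicing these together yields the long exact sequence. Concretely: rotate to $Z^\bullet[-1]\to X^\bullet\to Y^\bullet\to Z^\bullet$, truncate, and observe that $H^i_{\mathcal{S}}$ is cohomological, meaning it sends any distinguished triangle to a six-term exact sequence in $\mathcal{H}_{\mathcal{S}}$; iterating over all $i$ and using the shift compatibility $H^i_{\mathcal{S}}(W^\bullet[1])\cong H^{i+1}_{\mathcal{S}}(W^\bullet)$ assembles the infinite long exact sequence.

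The main obstacle I anticipate is the verification that $\mathcal{H}_{\mathcal{S}}$ is \emph{abelian} rather than merely having kernels and cokernels — specifically the coincidence of image and coimage. This requires care because it is not formal: one must extract the correct subquotient of $Y^\bullet$ from the octahedron built on $f:X^\bullet\to Y^\bullet$, check that it lies in the heart (again a $\mathcal{S}$-cohomology concentration computation), and then produce a canonical isomorphism $\mathrm{coim}\,f\xrightarrow{\sim}\im f$ whose inverse one also has to exhibit. All of this is classical and due to \cite{BBD82}; I would cite that reference for the routine octahedral diagram chases and concentrate the exposition on the points where the non-degeneracy hypothesis is actually used, namely in passing from "all $\mathcal{S}$-cohomologies vanish" to "the object is zero", which is what makes the cohomological functors jointly conservative and hence makes the exactness criterion in (1) an iff rather than merely a sufficient condition.
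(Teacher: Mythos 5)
The paper does not prove Theorem \ref{theorem:t-str:fundamental}: it is stated without proof as the classical result of Be\u\i linson--Bernstein--Deligne \cite{BBD82}, so there is no proof of the paper's own to compare yours against. As a sketch of the BBD argument your outline is broadly sound: the cone $C^\bullet$ of a morphism $f$ between objects of the heart lies in $\mathcal{S}^{\leq 0}\cap\mathcal{S}^{\geq -1}$, its truncations produce kernel and cokernel, coincidence of image and coimage comes from the octahedron, and part (2) follows once one knows $H^0_\mathcal{S}$ is a cohomological functor. A few points, however, should be corrected.

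First, you misplace where non-degeneracy is used. Neither the abelianness of $\mathcal{H}_\mathcal{S}$, nor the ``if and only if'' in part (1), nor the long exact sequence of part (2) requires non-degeneracy: all three are proved in \cite{BBD82} without that hypothesis. Non-degeneracy is only what makes the family $\{H^i_\mathcal{S}\}_i$ jointly conservative (an object with vanishing $\mathcal{S}$-cohomology in all degrees is zero), a fact not asserted in this theorem. Your claim that it is what upgrades the exactness criterion in (1) to an ``iff'' is therefore incorrect; that direction is obtained purely from the truncation triangle of $C^\bullet$. Second, a slip in the truncation identification: for $C^\bullet\in\mathcal{S}^{\leq 0}\cap\mathcal{S}^{\geq -1}$ one has $\sigma^{\geq 0}C^\bullet\cong H^0_\mathcal{S}(C^\bullet)$, which gives the cokernel, while the kernel is read off from $\sigma^{\leq -1}C^\bullet\cong H^{-1}_\mathcal{S}(C^\bullet)[1]$; your sentence identifies $\sigma^{\geq 0}C^\bullet$ with $H^{-1}_\mathcal{S}(C^\bullet)$. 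Third, there is mild circularity in ``applying the $\mathcal{S}$-cohomological functors'' to show that $C^\bullet$ is concentrated in degrees $-1,0$: at that stage you have not yet proved $H^0_\mathcal{S}$ is cohomological, so the concentration should instead be obtained directly from the rotated triangle $Y^\bullet\to C^\bullet\to X^\bullet[1]$ together with closure of $\mathcal{S}^{\leq 0}$ and $\mathcal{S}^{\geq -1}$ under extensions (a consequence of T2) and T3)), and only afterwards may one conclude that each $H^i_\mathcal{S}$ is cohomological.
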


As can be easily seen, the heart $\mathcal{H}_\mathcal{D}$ of the natural $t$-structure of
$\mathcal{D}(\mathcal{A})$ is (equivalent to) $\mathcal{A}$ itself via the embedding
$\mathcal{A}\to\mathcal{D}(\mathcal{A})$ defined by
\[ X\mapsto X[0]=(\xymatrix@1{\cdots\ar[r]&0\ar[r]&X\ar[r]&0\ar[r]&\cdots})\]
whose quasi-inverse is $H^0$, the usual $0$th-cohomology functor.

As it happens for torsion pairs, the aisle or the coaisle of a $t$-structure is sufficient to characterise the whole $t$-structure. Indeed,
we give the following lemma by Keller and Vossieck \cite{KellerVossieck88}. 

\begin{lemma}\label{lemma:t-str:aisle-coaisle-duality}
	Let $\mathcal{R}=(\mathcal{R}^{\leq0},\mathcal{R}^{\geq0})$ be a $t$-structure in
	$\mathcal{D}(\mathcal{A})$. Then
	\begin{align*}
		\mathcal{R}^{\leq0} &=
		\left\{X^\bullet\in\mathcal{D}(\mathcal{A}):\;\Hom_{\mathcal{D}(\mathcal{A})}(X^\bullet,Y^\bullet)=0
			\text{ for all } Y^\bullet\in\mathcal{R}^{\geq1} \right\}\\
		\mathcal{R}^{\geq0} &=
		\left\{Y^\bullet\in\mathcal{D}(\mathcal{A}):\;\Hom_{\mathcal{D}(\mathcal{A})}(X^\bullet,Y^\bullet)=0
			\text{ for all } X^\bullet\in\mathcal{R}^{\leq-1} \right\}.
	\end{align*}
\end{lemma}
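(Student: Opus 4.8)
The plan is to prove the two equalities separately, each by double inclusion, using only the defining axioms T1)–T3) of a $t$-structure together with the adjunctions (equivalently, the approximating triangles) they provide. By symmetry — passing to the opposite category, which turns an aisle into a coaisle — it suffices to establish the first identity, namely that
\[
\mathcal{R}^{\leq0}=\left\{X^\bullet:\;\Hom_{\mathcal{D}(\mathcal{A})}(X^\bullet,Y^\bullet)=0\text{ for all }Y^\bullet\in\mathcal{R}^{\geq1}\right\};
\]
I will flag the dualisation at the end rather than repeat the argument.

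The inclusion $\subseteq$ is immediate: it is exactly axiom T2) together with the definition $\mathcal{R}^{\geq1}=\mathcal{R}^{\geq0}[-1]$, since $\Hom_{\mathcal{D}(\mathcal{A})}(\mathcal{S}^{\leq0},\mathcal{S}^{\geq1})=0$ says precisely that every object of $\mathcal{R}^{\leq0}$ is right-orthogonal to every object of $\mathcal{R}^{\geq1}$. For the reverse inclusion $\supseteq$, take $X^\bullet$ with $\Hom(X^\bullet,\mathcal{R}^{\geq1})=0$. Apply T3) to get an approximating triangle
\[
\xymatrix{A^\bullet\ar[r]^-{f}&X^\bullet\ar[r]^-{g}&B^\bullet\ar[r]&A^\bullet[1]}
\]
with $A^\bullet\in\mathcal{R}^{\leq0}$ and $B^\bullet\in\mathcal{R}^{\geq1}$. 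The hypothesis on $X^\bullet$ kills the map $g:X^\bullet\to B^\bullet$. Now I would invoke the standard fact about triangulated categories that a morphism $g$ in a distinguished triangle $A^\bullet\xrightarrow{f}X^\bullet\xrightarrow{g}B^\bullet\to A^\bullet[1]$ vanishes if and only if the triangle splits, i.e. $X^\bullet\simeq A^\bullet\oplus B^\bullet$ and $f$ is the canonical split monomorphism. (Concretely: $g=0$ means $g$ factors through $0$, so by the axioms the triangle is isomorphic to the direct sum of the triangles $A^\bullet\xrightarrow{\mathrm{id}}A^\bullet\to0\to A^\bullet[1]$ and $0\to B^\bullet\xrightarrow{\mathrm{id}}B^\bullet\to0$.) Hence $X^\bullet$ is a direct summand of $A^\bullet\oplus B^\bullet$; more precisely $X^\bullet\simeq A^\bullet\oplus B^\bullet$. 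To conclude that $X^\bullet\in\mathcal{R}^{\leq0}$ it then suffices to show $B^\bullet=0$, which follows from non-degeneracy–type reasoning, or more simply: $B^\bullet$ is a direct summand of $X^\bullet$, hence $\mathrm{id}_{B^\bullet}$ factors through $X^\bullet$, hence lies in $\Hom(X^\bullet,B^\bullet)$-image — but $\Hom(X^\bullet,B^\bullet)=0$ by hypothesis since $B^\bullet\in\mathcal{R}^{\geq1}$, forcing $\mathrm{id}_{B^\bullet}=0$ and $B^\bullet=0$. Therefore $X^\bullet\simeq A^\bullet\in\mathcal{R}^{\leq0}$, using that aisles are strict (closed under isomorphism). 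The second identity follows by applying the first to the opposite $t$-structure $(\mathcal{R}^{\geq0})^{\mathrm{op}},(\mathcal{R}^{\leq0})^{\mathrm{op}})$ in $\mathcal{D}(\mathcal{A})^{\mathrm{op}}$, where $\Hom$-spaces are reversed and the shift by $1$ becomes a shift by $-1$; unwinding gives exactly the stated orthogonality description of $\mathcal{R}^{\geq0}$ in terms of $\mathcal{R}^{\leq-1}$.

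The only genuinely nontrivial point is the middle step: that a vanishing map in a triangle splits it. This is elementary but does require care — one uses that $g=0$ implies $g$ factors as $X^\bullet\to0\to B^\bullet$, then completes $X^\bullet\to0$ to a triangle $X^\bullet\to0\to X^\bullet[1]\xrightarrow{\mathrm{id}}X^\bullet[1]$ and uses the octahedral/rotation axioms to identify the original triangle up to isomorphism with a direct sum, so that $f$ admits a retraction. I would either cite this from a standard reference on triangulated categories (e.g. the "splitting of triangles" lemma) or, if a self-contained argument is wanted, note that the composite $B^\bullet[-1]\to A^\bullet\xrightarrow{f}X^\bullet$ equals the composite $B^\bullet[-1]\xrightarrow{g[-1]=0}X^\bullet[-1][1]=X^\bullet$... — in any case the cleanest route is just to cite it. Everything else is formal manipulation with the axioms T1)–T3) and the strictness of aisles/coaisles.
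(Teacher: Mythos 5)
The paper cites this lemma to Keller--Vossieck without proof, so your argument stands on its own. The overall plan (prove one identity, dualise; take the approximating triangle from T3), show the $\mathcal R^{\geq1}$-part vanishes) is the right one, but the central step contains a genuine error.

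You claim that in a distinguished triangle $A^\bullet\xrightarrow{f}X^\bullet\xrightarrow{g}B^\bullet\xrightarrow{h}A^\bullet[1]$, the vanishing of the \emph{middle} map $g$ forces $X^\bullet\simeq A^\bullet\oplus B^\bullet$. This is not the splitting lemma. The standard fact is that $X^\bullet\simeq A^\bullet\oplus B^\bullet$ is equivalent to the vanishing of the \emph{connecting} map $h:B^\bullet\to A^\bullet[1]$. With $g=0$ instead, rotate backwards to the distinguished triangle $B^\bullet[-1]\xrightarrow{-h[-1]}A^\bullet\xrightarrow{f}X^\bullet\xrightarrow{g}B^\bullet$, whose connecting map is $g=0$; the splitting lemma applied there gives $A^\bullet\simeq X^\bullet\oplus B^\bullet[-1]$, i.e.\ $B^\bullet$ is a direct summand of $A^\bullet[1]$, not of $X^\bullet$. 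Your parenthetical ``concrete'' argument has the same defect: the direct sum of the triangles $A^\bullet\xrightarrow{\mathrm{id}}A^\bullet\to0\to$ and $0\to B^\bullet\xrightarrow{\mathrm{id}}B^\bullet\to$ has middle map the projection $A^\bullet\oplus B^\bullet\twoheadrightarrow B^\bullet$, which is not zero unless $B^\bullet=0$; so it is not isomorphic to a triangle with $g=0$. Consequently your ``more simply'' derivation of $B^\bullet=0$ rests on the false premise that $B^\bullet$ is a retract of $X^\bullet$.

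The fix is short, and in fact the splitting lemma can be avoided entirely. Apply $\Hom_{\mathcal D(\mathcal A)}(-,B^\bullet)$ to the triangle and use the resulting exact sequence
\[
\Hom(A^\bullet[1],B^\bullet)\longrightarrow\Hom(B^\bullet,B^\bullet)\xrightarrow{\ g^\ast\ }\Hom(X^\bullet,B^\bullet).
\]
The left term vanishes by T2) since $A^\bullet[1]\in\mathcal R^{\leq-1}\subseteq\mathcal R^{\leq0}$ and $B^\bullet\in\mathcal R^{\geq1}$; the right term vanishes by the hypothesis on $X^\bullet$. Exactness forces $\mathrm{id}_{B^\bullet}=0$, so $B^\bullet=0$, and then the triangle degenerates to an isomorphism $A^\bullet\simeq X^\bullet$, giving $X^\bullet\in\mathcal R^{\leq0}$ by strictness. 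The dualisation step you describe is fine once this is corrected.
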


We now give the following proposition, which gives a very useful way to construct $t$-structures.

\begin{proposition}[{\cite[Lemma 3.1, Proposition 3.2]{TarrioLopezSalorio03}}]\label{prop:t-str:susp}
	Let $E$ be any complex in $\mathcal{D}(\mathcal{A})$, and let $\mathcal{U}$ be the smallest
	cocomplete pre-aisle containing $E$, that is: the smallest full, strict subcategory of
	$\mathcal{D}(A)$ closed under positive shifts, extensions and coproducts. Then,
	$\mathcal{U}$ is an aisle and the corresponding coaisle is
	\begin{align*}
		\mathcal{U}^\bot &=
		\left\{Y^\bullet\in\mathcal{D}(\mathcal{A}):\;\Hom_{\mathcal{D}(\mathcal{A})}(X^\bullet,Y^\bullet)=0
		\text{ for every } X^\bullet\in\mathcal{U}[1] \right\}\\
		&=
		\left\{Y^\bullet\in\mathcal{D}(\mathcal{A}):\;\Hom_{\mathcal{D}(\mathcal{A})}(E,Y^\bullet[i])=0
		\text{ for every } i<0 \right\}.
	\end{align*}
\end{proposition}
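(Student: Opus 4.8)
The plan is to reduce everything to the general construction of a $t$-structure from a set of generators, namely Proposition \ref{prop:t-str:susp}, and to the duality between aisle and coaisle of Lemma \ref{lemma:t-str:aisle-coaisle-duality}. We are given a complex $E$ and we must show that the smallest cocomplete pre-aisle $\mathcal{U}$ containing $E$ (closed under positive shifts, extensions, and coproducts) is actually an aisle of a $t$-structure, and then identify the corresponding coaisle in the two stated forms.

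First I would set $\mathcal{U}^\bot = \{Y^\bullet : \Hom_{\mathcal{D}(\mathcal{A})}(X^\bullet,Y^\bullet)=0 \text{ for all } X^\bullet\in\mathcal{U}[1]\}$, and verify the easy equality of the two descriptions of $\mathcal{U}^\bot$. The inclusion $\supseteq$ between the "$\Hom(E,Y[i])=0$ for $i<0$" description and the "$\Hom(\mathcal{U}[1],Y)=0$" description uses that $E\in\mathcal{U}$, so $E[j]\in\mathcal{U}[1]$ for $j\geq 1$, i.e. $\Hom(E[j],Y)=\Hom(E,Y[-j])=0$ for $j\geq 1$; conversely, since $\mathcal{U}[1]$ is the smallest full strict subcategory containing all $E[j]$, $j\geq 1$, and closed under positive shifts, extensions and coproducts, and since $\Hom(-,Y)$ turns coproducts into products, shifts into shifts, and sends distinguished triangles to long exact sequences of abelian groups, the class $\{X^\bullet:\Hom(X^\bullet,Y^\bullet)=0\}$ is closed under exactly those operations; hence $\Hom(E[j],Y)=0$ for all $j\geq1$ forces $\Hom(\mathcal{U}[1],Y)=0$. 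This "orthogonality classes are closed under the relevant operations" observation is the workhorse of the whole argument.

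Next I would check the three $t$-structure axioms for the pair $(\mathcal{U},\mathcal{U}^\bot)$. Axiom T1) is immediate: $\mathcal{U}\subseteq\mathcal{U}[-1]=\mathcal{U}[1][-1]$ since $\mathcal{U}$ is closed under positive shifts, and dually $\mathcal{U}^\bot\supseteq\mathcal{U}^\bot[-1]$ follows from the second description (replacing $Y$ by $Y[-1]$ only enlarges the range of vanishing conditions). Axiom T2), $\Hom(\mathcal{U},\mathcal{U}^\bot[1])=0$, is essentially the definition of $\mathcal{U}^\bot$ after a shift. The real content is T3): every complex $X^\bullet$ sits in a triangle $A^\bullet\to X^\bullet\to B^\bullet\to A^\bullet[1]$ with $A^\bullet\in\mathcal{U}$ and $B^\bullet\in\mathcal{U}^\bot[1]=\mathcal{U}^\bot$ shifted. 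This is exactly the approximation statement proved in \cite[Lemma 3.1, Proposition 3.2]{TarrioLopezSalorio03}, and I would invoke it as a black box: the construction there is a transfinite small-object-type argument (a homotopy colimit of a sequence of cones on maps out of coproducts of shifts of $E$) producing the $\mathcal{U}$-approximation, after which $B^\bullet$ automatically lies in $\mathcal{U}^\bot[1]$ by the orthogonality established above.

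The step I expect to be the main obstacle — and the one I would cite rather than reprove — is precisely the existence of these approximating triangles, i.e. that $\mathcal{U}$ is \emph{co-reflective}: one needs enough (co)limits and a cardinality/solution-set bound to run the transfinite construction, which is why the hypothesis that $\mathcal{A}$ (hence $\mathcal{D}(\mathcal{A})$) has coproducts is essential and why the statement is attributed to Alonso Tarrío, Jeremías López, and Saorín. Once T3) is granted, Lemma \ref{lemma:t-str:aisle-coaisle-duality} applied to the $t$-structure $(\mathcal{U},\mathcal{U}^\bot)$ shows that its coaisle is forced to be exactly $\{Y^\bullet:\Hom(X^\bullet,Y^\bullet)=0 \text{ for all } X^\bullet\in\mathcal{U}[-1]\}$, which by the shift $\mathcal{U}[-1]=(\mathcal{U}[1])[-2]\supseteq\ldots$ — more simply, by re-indexing the lemma's coaisle description — coincides with the $\mathcal{U}^\bot$ we defined; combined with the first paragraph this yields both displayed formulas, completing the proof.
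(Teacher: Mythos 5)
The paper does not prove this proposition at all: it is imported verbatim from Alonso Tarr\'{i}o--Jerem\'{i}as L\'{o}pez--Souto Salorio, so there is no ``paper proof'' to match. Your plan --- verify T1), T2), and the two descriptions of $\mathcal{U}^\bot$ by hand, and quote the cited reference for the genuinely hard part (existence of approximation triangles, i.e.\ that $\mathcal{U}$ really is an aisle) --- is a reasonable way to organise the argument.

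There is, however, a concrete error in the one substantive step you do carry out. You assert that the class $C=\{X^\bullet:\Hom_{\mathcal{D}(\mathcal{A})}(X^\bullet,Y^\bullet)=0\}$ is ``closed under exactly those operations,'' in particular under positive shifts. This is false: $\Hom(X[1],Y)\cong\Hom(X,Y[-1])$, which need not vanish when $\Hom(X,Y)$ does (take $X=\mathbb{Z}$, $Y=\mathbb{Z}[1]$ in $\mathcal{D}(\mathbb{Z}\text{-Mod})$). So knowing $E[j]\in C$ for all $j\geq1$ does not, by the stated reasoning, force $\mathcal{U}[1]\subseteq C$: the smallest cocomplete pre-aisle containing $E[1]$ is being compared against a class that lacks one of the required closure properties. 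The repair is easy and worth recording: replace $C$ by
\[
C'=\bigl\{X^\bullet:\Hom_{\mathcal{D}(\mathcal{A})}(X^\bullet,Y^\bullet[i])=0\ \text{for all }i\leq 0\bigr\}
  =\bigl\{X^\bullet:\Hom_{\mathcal{D}(\mathcal{A})}(X^\bullet[j],Y^\bullet)=0\ \text{for all }j\geq 0\bigr\}.
\]
This $C'$ \emph{is} closed under positive shifts (trivially from the second description), extensions (long exact sequence) and coproducts, and $E[1]\in C'$ is exactly the hypothesis $\Hom(E,Y[i])=0$ for $i<0$. Hence the smallest cocomplete pre-aisle containing $E[1]$, which is $\mathcal{U}[1]$, lies in $C'\subseteq C$, giving the desired inclusion.

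Separately, there are two sign slips that should be fixed for a clean write-up: axiom T2) asks for $\Hom(\mathcal{U},\mathcal{U}^\bot[-1])=0$, not $\Hom(\mathcal{U},\mathcal{U}^\bot[1])=0$ (and, with that sign, it is indeed the definition of $\mathcal{U}^\bot$ after a shift); likewise the coaisle description coming from Lemma~\ref{lemma:t-str:aisle-coaisle-duality} is $\{Y^\bullet:\Hom(X^\bullet,Y^\bullet)=0\ \forall X^\bullet\in\mathcal{U}[1]\}$, since $\mathcal{R}^{\leq -1}=\mathcal{R}^{\leq 0}[1]$, not $\mathcal{U}[-1]$. Also note that once you have \emph{defined} $\mathcal{U}^\bot$ by orthogonality and checked T1)--T3), the coaisle is $\mathcal{U}^\bot$ by construction and the appeal to Lemma~\ref{lemma:t-str:aisle-coaisle-duality} at the end is redundant.
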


\begin{remark}\label{rmk:d-gen}
	As a first application of this proposition, it is easy to see that if $\mathcal{A}$ has a
	projective generator $E$, then the natural $t$-structure of $\mathcal{D}(\mathcal{A})$ will be
	that generated by $E$ (in the sense of the proposition). This will be the case when we will
	consider $\mathcal{A}=A\text{-Mod}$, with $E=A$.
\end{remark}

\begin{remark}\label{rmk:susp-characterisation}
	In the case where the object $E$ is in fact a module, that is, a complex concentrated in
	degree zero, we shall give a characterisation of the aisle $\mathcal{U}$ generated by $E$.

	First, $\mathcal{U}$ contains $E$; and it is closed under positive shifts, hence it contains
	$E[i]$ for
	every $i>0$. $\mathcal{U}$ is closed under arbitrary coproducts; let then $J=\cup_{i>0} J_i$
	be a set of indeces, and let $E_j=E[i]$ for every $j\in J_i$. Then the coproduct
	$\coprod_{j\in J} E_j = \coprod_{i>0} E^{(J_i)}[i]$ belongs to $\mathcal{U}$ as well.
	If $\mathcal{V}$ is the full subcategory of all objects isomorphic to these coproducts, this means that
	$\mathcal{V}\subseteq \mathcal{U}$. Since $\mathcal{U}$ is also closed under extensions, if
	we call $\mathcal{V}'$ the extension closure of $\mathcal{V}$, we have
	$\mathcal{V}'\subseteq\mathcal{U}$ as well. Moreover, since coproducts of distinguished
	triangles are distinguished, from the fact that $\mathcal{V}$ is closed under arbitrary
	coproducts follows easily that $\mathcal{V}'$ is as well. Hence, $\mathcal{V}'$ is a
	cocomplete pre-aisle, and by definition $\mathcal{U}\subseteq\mathcal{V}'$.

	In conclusion, objects of $\mathcal{U}$ are isomorphic to complexes having zero terms in
	positive degrees and coproducts of $E$ in nonpositive degrees.
\end{remark}

\section{$n$-Tilting objects and associated $t$-structures}\label{sec:4}

In the following, we are going to work with a generalisation of classical $n$-tilting modules; the
definition we give is more oriented towards the derived category $\mathcal{D}(A)$ of $A$-Mod, which
will be our setting.

\begin{definition}\label{definition:ntil:tilting-object}
	A left $A$-module $T$ is (non necessarily classical) \emph{$n$-tilting} if it satisfies the following
	properties:
	\begin{enumerate}
		\item[$P_n$)] $T$ has projective dimensions at most $n$, i.e. there exists an exact sequence
			\[\xymatrix{0\ar[r]&P_n\ar[r]&\cdots\ar[r]&P_1\ar[r]&P_0\ar[r]&T\ar[r]&0}\]
			in $A$-Mod with the $P_i$ projectives;
		\item[$E_n$)] $T$ is rigid, i.e. $\Ext_A^i(T,T^{(\Lambda)})=0$ for every index $0<i\leq n$
			and set $\Lambda$;
		\item[$G_n$)] $T$ is a \emph{generator} in $\mathcal{D}(A)$, meaning that if for a complex
			$X^\bullet$ we have $\Hom_{\mathcal{D}(A)}(T,X[i])=0$ for every
			$i\in\mathbb{Z}$, then $X^\bullet =0$ in $\mathcal{D}(A)$.
	\end{enumerate}
\end{definition}

Notice that a classical $n$-tilting module is indeed $n$-tilting: in particular, $p_n$) implies
$P_n)$, $p_n$) and $e_n$) imply $E_n$) (see the Stacks Project \cite[Proposition 15.72.3]{stacks-project}
and $g_n$) implies $G_n$) (see Positselski and Stovicek \cite[Corollary 2.6]{PositselskiStovicek17}).

The discussion about $t$-structures in the previous section is justified by the following
construction.
Let $T$ be a $n$-tilting left $A$-module and consider the pair
$\mathcal{T}=(\mathcal{T}^{\leq0},\mathcal{T}^{\geq0})$ of subcategories
of $\mathcal{D}(A)$
\begin{align*}
	\mathcal{T}^{\leq0} &= \left\{X^\bullet\in\mathcal{D}(A):\;
	\Hom_{\mathcal{D}(A)}(T,X^\bullet[i])=0\text{ for every }i>0\right\}\\
	\mathcal{T}^{\geq0} &= \left\{X^\bullet\in\mathcal{D}(A):\;
	\Hom_{\mathcal{D}(A)}(T,X^\bullet[i])=0\text{ for every }i<0\right\}.
\end{align*}

\begin{remark}\label{rmk:T-gen}
	This is the $t$-structure generated by $T$ in the sense of Proposition
	\ref{prop:t-str:susp}. Indeed, let $\mathcal{G}=(\mathcal{G}^{\leq0},\mathcal{G}^{\geq0})$ be the
	generated $t$-structure. We have $\mathcal{T}^{\geq0}=\mathcal{G}^{\geq0}$.
	For the aisle, notice that $\mathcal{T}^{\leq0}$ contains $T$ by $E_n$); and it is clearly closed
	under positive shifts, hence it contains any $T[i]$ for $i>0$. Now, we show that it is
	closed under arbitrary coproducts of such complexes $T[i]$. Let $J=\cup_{i>0}J_i$ be a set,
	let $T_j = T[i]$ for every $j\in J_i$,
	and consider the coproduct $\coprod_{j\in J} T_j = \coprod_{i>0} T^{(J_i)}[i]$. Notice that
	since by $P_n$) $T$ has projective dimension $n$,we have
	\[\Hom_{\mathcal{D}(A)}(T,\coprod_{j\in J} T_j) =
	\Hom_{\mathcal{D}(A)}(T,\coprod_{i>0}T^{(J_i)}[i]) =
	\Hom_{\mathcal{D}(A)}(T,\coprod_{1\leq i \leq n}T^{(J_i)}[i])\]
	Now, since $\mathcal{D}(A)$ is an additive category, this is itself isomorphic to
	\[\Hom_{\mathcal{D}(A)}(T,\prod_{1\leq i \leq n}T^{(J_i)}[i])\simeq
	\prod_{1\leq i\leq n} \Hom_{\mathcal{D}(A)}(T, T^{(J_i)}[i]) = 0 \]
	which is zero by property $E_n$).
	Lastly, $\mathcal{T}^{\leq0}$ is clearly closed under extensions, and so by Remark
	\ref{rmk:susp-characterisation} it contains $\mathcal{G}^{\leq0}$.

	For the inclusion $\mathcal{T}^{\leq0}\subseteq\mathcal{G}^{\leq0}$, take an
	object $X^\bullet\in\mathcal{T}^{\leq0}$, and consider its approximation triangle with
	respect to $\mathcal{G}$,
	\[\xymatrix{A^\bullet\ar[r]&X^\bullet\ar[r]&B^\bullet\ar[r]^-{+1}&}.\]
	We have $A^\bullet\in\mathcal{G}^{\leq0}\subseteq\mathcal{T}^{\leq0}$; and since
	$\mathcal{T}^{\leq0}$ is clearly closed under cones, $B^\bullet\in\mathcal{T}^{\leq0}$ as
	well. So in the end
	$B^\bullet\in\mathcal{T}^{\leq0}\cap\mathcal{G}^{\geq1}=\mathcal{T}^{\leq0}\cap\mathcal{T}^{\geq1}$
	which is 0 by $G_3$).
\end{remark}

As a side note, observe that if $T$ is classical $n$-tilting, it induces a triangulated equivalence
$R\Hom_A(T,?):\;\mathcal{D}(A)\to\mathcal{D}(B)$ (see \cite{ClineParshallScott86}); then, by the fact that
\[\Hom_{\mathcal{D}(A)}(T,X^\bullet[i])=H^iR\Hom_A(T,X^\bullet)\]
we may recognise in $\mathcal{T}:=(\mathcal{T}^{\leq0},\mathcal{T}^{\geq0})$ the ``pullback'' of the natural $t$-structure of
$\mathcal{D}(B)$ along $R\Hom_A(T,?)$.

\begin{remark}\label{rmk:review_previous}
	Without further study, the $t$-structure $\mathcal{T}$ can be immediately used to
	review some previous results.

	First, we can greatly simplify the proof of Remark \ref{remark:lo}.
	In the notation used there, to prove that there are no non zero morphisms $X\to M$, for
	$X$ in $\cap_{i> e-1}\Ker\Ext_A^i(T,?)$ and $M$ in $KE_e$, we may just recognise that
	we have $X=X[0]\in \mathcal{T}^{\leq e-1}$ and $M\in\mathcal{T}^{\geq e}$, and use
	axiom T2) of $t$-structures. 

	Second, we may read our Lemma
	\ref{lemma:description} under a different light: given the characterisation of objects in
	$\mathcal{T}^{\leq0}$ as in Remark \ref{rmk:susp-characterisation}, the lemma can be seen to be the
	equality
	\[\cap_{i>e}\Ker\Ext_A^i(T,?) = A\text{-Mod}\cap\mathcal{T}^{\leq e}.\]
\end{remark}

In the following, $T$ will be a $n$-tilting module; $\mathcal{T}$ will be the associated
$t$-structure, as defined above.
The solution that we are going to give to our decomposition problem originates from the
interaction of the $t$-structure $\mathcal{T}$ with the natural one of $\mathcal{D}(A)$ (see
Definition \ref{definition:t-str:natural}).
First, we make an easy observation.

\begin{proposition}\label{proposition:ntil:inclusions}
	The following inclusions of aisles and coaisles hold:
	\[\mathcal{D}^{\leq -n}\subseteq \mathcal{T}^{\leq0}\subseteq \mathcal{D}^{\leq0}
	\qquad \text{and}\qquad
	  \mathcal{D}^{\geq0}\subseteq \mathcal{T}^{\geq0}\subseteq \mathcal{D}^{\geq -n}. \]
\end{proposition}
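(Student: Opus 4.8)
The plan is to prove the four inclusions by unwinding the definitions of the aisles/coaisles in terms of vanishing of $\Hom_{\mathcal{D}(A)}(T,-[i])$ and of ordinary cohomology, using the projective dimension bound $P_n)$ and the duality between the two statements. Since $\mathcal{T}$ is a $t$-structure, by Lemma~\ref{lemma:t-str:aisle-coaisle-duality} the two chains of inclusions (for aisles and for coaisles) are equivalent; so I would establish the aisle chain $\mathcal{D}^{\leq -n}\subseteq\mathcal{T}^{\leq 0}\subseteq\mathcal{D}^{\leq 0}$ and then deduce the coaisle chain by orthogonality, or simply remark that the coaisle chain is proved by the dual argument (working with injective dimension / the characterisation $H^i(X^\bullet)=0$ for $i<0$).

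First I would prove $\mathcal{T}^{\leq 0}\subseteq\mathcal{D}^{\leq 0}$. By Remark~\ref{rmk:d-gen}, the natural $t$-structure of $\mathcal{D}(A)$ is the one generated by the projective generator $A$; equivalently, $\mathcal{D}^{\leq 0}=\{X^\bullet:\Hom_{\mathcal{D}(A)}(A,X^\bullet[i])=0\text{ for }i>0\}$ since $\Hom_{\mathcal{D}(A)}(A,X^\bullet[i])\cong H^i(X^\bullet)$. Now take $X^\bullet\in\mathcal{T}^{\leq 0}$, so $\Hom_{\mathcal{D}(A)}(T,X^\bullet[i])=0$ for all $i>0$. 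Since $T$ is a generator of $\mathcal{D}(A)$ by $G_n)$, and (more to the point) $A$ is built from copies of $T$ via the $g_n)$-type coresolution — or, more robustly, since by Remark~\ref{rmk:T-gen} the aisle $\mathcal{T}^{\leq 0}$ is exactly the aisle generated by $T$ — one concludes $T\in\mathcal{D}^{\leq 0}$ forces the generated aisle $\mathcal{T}^{\leq 0}$ to lie inside $\mathcal{D}^{\leq 0}$: indeed $\mathcal{D}^{\leq 0}$ is a cocomplete pre-aisle containing $T$ (as $T$ is a module, hence concentrated in degree $0$), and $\mathcal{T}^{\leq 0}$ is the \emph{smallest} such, by Proposition~\ref{prop:t-str:susp} and Remark~\ref{rmk:T-gen}. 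That settles the right-hand inclusion of the aisle chain cleanly.

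For the left-hand inclusion $\mathcal{D}^{\leq -n}\subseteq\mathcal{T}^{\leq 0}$, I would argue concretely with complexes. If $X^\bullet\in\mathcal{D}^{\leq -n}$, then $X^\bullet$ is quasi-isomorphic to a complex concentrated in degrees $\leq -n$. Using the projective resolution of $T$ of length $n$ from $P_n)$, one computes $\Hom_{\mathcal{D}(A)}(T,X^\bullet[i])$ via the bounded complex $P_\bullet$ (in degrees $0,\dots,-n$ after the usual reindexing, or $-n,\dots,0$ cohomologically): the hyper-$\Hom$ $\Hom_{\mathcal{D}(A)}(T,X^\bullet[i])$ is computed by a total complex whose nonzero contributions in degree $i$ come from $\Hom_A(P_{-j},X^{i+j})$ with $0\leq j\leq n$; since $X^k=0$ for $k>-n$ (up to quasi-isomorphism) and $P_{-j}=0$ for $j\notin[0,n]$, the only surviving terms force $i+j\leq -n$ with $j\leq n$, hence $i\leq 0$. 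Thus $\Hom_{\mathcal{D}(A)}(T,X^\bullet[i])=0$ for $i>0$, i.e. $X^\bullet\in\mathcal{T}^{\leq 0}$. This is the step where I expect the main (purely bookkeeping) obstacle: one must be careful with the sign/degree conventions for $\Cone$, $[1]$, and the identification of $R\Hom$ with the appropriate total complex, so that the range of indices comes out as claimed; it is routine but error-prone.

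Finally, the coaisle chain $\mathcal{D}^{\geq 0}\subseteq\mathcal{T}^{\geq 0}\subseteq\mathcal{D}^{\geq -n}$ follows either by the dual computation — using that $\mathcal{T}^{\geq 0}=\{X^\bullet:\Hom_{\mathcal{D}(A)}(T,X^\bullet[i])=0\text{ for }i<0\}$ and again the length-$n$ projective resolution of $T$, to bound the range of $i$ for which the hyper-$\Hom$ can be nonzero — or, more slickly, from the aisle chain by the orthogonality formulas of Lemma~\ref{lemma:t-str:aisle-coaisle-duality}: $\mathcal{S}^{\leq 0}\subseteq\mathcal{R}^{\leq 0}$ for two $t$-structures implies $\mathcal{R}^{\geq 1}\subseteq\mathcal{S}^{\geq 1}$, and shifting gives the reversed inclusions for the coaisles. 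Applying this to the pairs $(\mathcal{D}[n],\mathcal{T})$ and $(\mathcal{T},\mathcal{D})$ — noting that $\mathcal{D}^{\leq -n}$ is the aisle of the shifted natural $t$-structure $\mathcal{D}[n]$, whose coaisle is $\mathcal{D}^{\geq -n}$ — yields exactly $\mathcal{D}^{\geq 0}\subseteq\mathcal{T}^{\geq 0}\subseteq\mathcal{D}^{\geq -n}$, completing the proof.
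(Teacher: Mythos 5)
Your proof is correct and shares the paper's overall strategy---establish some of the four inclusions by a direct $\Hom$-vanishing computation and recover the rest from the Keller--Vossieck orthogonality of Lemma~\ref{lemma:t-str:aisle-coaisle-duality}---but you pick a different pair of inclusions to prove directly. The paper proves the two ``outer'' inclusions $\mathcal{D}^{\leq -n}\subseteq\mathcal{T}^{\leq 0}$ and $\mathcal{D}^{\geq 0}\subseteq\mathcal{T}^{\geq 0}$ head-on: the first because for $i>0$ one has $X^\bullet[i]\in\mathcal{D}^{\leq -n-1}$ and $\Hom_{\mathcal{D}(A)}(T,\mathcal{D}^{\leq -n-1})=0$ since $T$ has projective dimension $n$; the second because for $i<0$ one has $X^\bullet[i]\in\mathcal{D}^{\geq 1}$ and $\Hom_{\mathcal{D}(A)}(T,\mathcal{D}^{\geq 1})=0$ since $T$ is a module in $\mathcal{D}^{\leq 0}$. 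It then gets $\mathcal{T}^{\leq 0}\subseteq\mathcal{D}^{\leq 0}$ and $\mathcal{T}^{\geq 0}\subseteq\mathcal{D}^{\geq -n}$ by orthogonality. You instead prove the whole aisle chain directly, and for $\mathcal{T}^{\leq 0}\subseteq\mathcal{D}^{\leq 0}$ you invoke Remark~\ref{rmk:T-gen} (that $\mathcal{T}^{\leq 0}$ is the smallest cocomplete pre-aisle containing $T$) together with the observation that $\mathcal{D}^{\leq 0}$ is a cocomplete pre-aisle containing $T$; this is a clean and valid alternative, though it leans on the non-trivial content of Remark~\ref{rmk:T-gen}, whereas the paper's two direct inclusions need nothing beyond the projective-dimension bound and the fact that $T$ sits in degree $0$. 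Your hyper-$\Hom$ bookkeeping for $\mathcal{D}^{\leq -n}\subseteq\mathcal{T}^{\leq 0}$ is simply the unpacked version of the paper's one-line argument and gives the same bound on the indices; the concluding orthogonality step is the same in both.
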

\begin{proof}
	Some of the inclusions are easy to prove: if $X^\bullet \in\mathcal{D}^{\leq -n}$, then
	for every $i>0$ we will have
	$X^\bullet[i]\in\mathcal{D}^{\leq-n-i}\subseteq\mathcal{D}^{\leq -n-1}$,
	hence $\Hom_{\mathcal{D}(A)}(T,X^\bullet[i])=0$ since $T$ has projective
	dimension $n$. On the other hand, if $X^\bullet\in\mathcal{D}^{\geq 0}$, then
	for every $i<0$ we will have $X^\bullet[i]\in\mathcal{D}^{\geq0-i}\subseteq\mathcal{D}^{\geq1}$,
	hence again $\Hom_{\mathcal{D}(A)}(T, X^\bullet[i])=0$ since
	$T\in\mathcal{D}^{\leq0}$. The other two inclusions can be easily proved from these using
	Lemma \ref{lemma:t-str:aisle-coaisle-duality}.
\end{proof}

\begin{remark}\label{remark:MiyashitaHearts}
	With Proposition \ref{proposition:ntil:inclusions}, we are ready to notice an important
	fact, which will be key later. Take a
	module $X$ in $KE_e$, for some $e=0,\dots,n$; in particular, being a module, it belongs to
	$A\text{-Mod}\simeq\mathcal{H}_\mathcal{D}\subseteq\mathcal{D}^{\geq0}\subseteq\mathcal{T}^{\geq0}$.
	Moreover, by definition, for every $i=0,\dots,e-1$ we have
	$0=\Ext_A^i(T,X)\simeq\Hom_{\mathcal{D}(A)}(T,X[i])$, hence $X$ belongs in fact to
	$\mathcal{T}^{\geq e}$. Lastly, again by definition, for every $i>e$ we have
	$0=\Ext_A^i(T,X)\simeq\Hom_{\mathcal{D}(A)}(T,X[i])$, hence $X$ belongs to $\mathcal{T}^{\leq
	e}$ as well.

	This proves that, after identifying $A\text{-Mod}\simeq\mathcal{H}_\mathcal{D}$, for every $e=0,\dots,n$
	the $e$-th Miyashita class is
	\[ KE_e = A\text{-Mod}\cap\mathcal{H}_{T}[-e].\]
\end{remark}

Let us now look at Proposition \ref{proposition:ntil:inclusions} in the $n=1$ case. Its proof suggests that we may focus on the
inclusions between the aisles (those between the coaisles being their ``dual'' in the sense of Lemma
\ref{lemma:t-str:aisle-coaisle-duality}).
If $T$ is a 1-tilting module we will then have:
\begin{equation}\tag{$\ast$}\label{eq:inclusions1}
	\mathcal{D}^{\leq -1}\subseteq\mathcal{T}^{\leq0}\subseteq\mathcal{D}^{\leq0}.
\end{equation}
In other words, complexes in $\mathcal{T}^{\leq0}$ are allowed to have any cohomology (with
respect to $\mathcal{D}$, which means the usual complex cohomology $H^i$)
in degrees $\leq-1$ and some kind of cohomology in degree 0, while
they must have 0 cohomology in higher degrees.

\begin{remark}\label{remark:zero-cohomology}
	We may try to characterise $H^0(X^\bullet)$ for $X^\bullet\in\mathcal{T}^{\leq0}$.
Notice that $X^\bullet$ sits in the approximation triangle with respect to $\mathcal{D}$
\[\xymatrix{\delta^{\leq-1}(X^\bullet)\ar[r]&X^\bullet\ar[r]&H^0(X^\bullet)[0]\ar[r]^-{+1}&}\]
where
$H^0(X^\bullet)[0]=H_\mathcal{D}^0(X^\bullet)=\delta^{\geq0}\delta^{\leq0}(X^\bullet)\simeq\delta^{\geq0}(X^\bullet)$
since $X^\bullet\in\mathcal{D}^{\leq0}$. If we apply the homological functor
$\Hom_{\mathcal{D}(A)}(T,?)$ to it, we get the long exact sequence of abelian groups
	\[\resizebox{\textwidth}{!}{\xymatrix@C1pc{\cdots\ar[r]&
	\Hom_{\mathcal{D}(A)}(T,X^\bullet[1])\ar[r]&\Hom_{\mathcal{D}(A)}(T,H^0(X^\bullet)[1])\ar[r]&
	\Hom_{\mathcal{D}(A)}(T,\delta^{\leq-1}(X^\bullet)[2])\ar[r]&\cdots}}\]
The last terms is 0 because $\delta^{\leq-1}(X^\bullet)\in\mathcal{D}^{\leq-1}\subseteq\mathcal{T}^{\leq0}$; similarly, the
first is 0 because $X^\bullet\in\mathcal{T}^{\leq0}$. This means that
\[\Ext_A^1(T,H^0(X^\bullet))\simeq \Hom_{\mathcal{D}(A)}(T,H^0(X^\bullet)[1])=0\]
as well, i.e. that $H^0(X^\bullet)\in KE_0$.
\qed
\end{remark}

The inclusions \eqref{eq:inclusions1} are precisely the hypothesis of the following proposition by
Polishchuk \cite{Polishchuk07}.
\begin{proposition}\label{proposition:ntil:t-str-tilting}
	Let $\mathcal{R},\mathcal{S}$ be two $t$-structures in $\mathcal{D}(A)$ such that
	\[\mathcal{R}^{\leq-1}\subseteq\mathcal{S}^{\leq0}\subseteq\mathcal{R}^{\leq0}\qquad
	\text{(or equivalently }
	  \mathcal{R}^{\geq0}\subseteq\mathcal{S}^{\geq0}\subseteq\mathcal{R}^{\geq-1} \text{)}.\]
	Then $\mathcal{S}$ is obtained by \emph{tilting} $\mathcal{R}$ with respect to a torsion
	pair $(\mathcal{X},\mathcal{Y})$ in the heart $\mathcal{H}_\mathcal{R}$, i.e.
	\begin{align*}
		\mathcal{S}^{\leq0} &= \left\{X^\bullet\in\mathcal{R}^{\leq0}:\; H_\mathcal{R}^0(X^\bullet)\in\mathcal{X}\right\}\\
		\mathcal{S}^{\geq0} &= \left\{X^\bullet\in\mathcal{R}^{\geq-1}:\; H_\mathcal{R}^{-1}(X^\bullet)\in\mathcal{Y}\right\}.
	\end{align*}
	The torsion pair $(\mathcal{X},\mathcal{Y})$ is constructed as
	\[\mathcal{X}=\mathcal{H}_\mathcal{R}\cap\mathcal{S}^{\leq0}=\mathcal{R}^{\geq0}\cap\mathcal{S}^{\leq0}
	\qquad\mathcal{Y}=\mathcal{H}_\mathcal{R}\cap\mathcal{S}^{\geq1}=\mathcal{R}^{\leq0}\cap\mathcal{S}^{\geq1}.\]
\end{proposition}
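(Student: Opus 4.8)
The plan is to produce the torsion pair $(\mathcal X,\mathcal Y)$ in the heart $\mathcal H_{\mathcal R}$ from the candidate formulas $\mathcal X=\mathcal H_{\mathcal R}\cap\mathcal S^{\leq0}$, $\mathcal Y=\mathcal H_{\mathcal R}\cap\mathcal S^{\geq1}$, and then to verify the two explicit descriptions of $\mathcal S^{\leq0}$ and $\mathcal S^{\geq0}$ by writing an arbitrary complex as a (triangulated) extension of an $\mathcal R$-truncation and an object of $\mathcal H_{\mathcal R}$. Two facts will be used repeatedly: first, the aisle and the coaisle of any $t$-structure are closed under extensions (equivalently, under cones of morphisms between their objects), which is immediate from the orthogonality descriptions of Lemma~\ref{lemma:t-str:aisle-coaisle-duality} together with axiom T2); second, the two forms of the hypothesis, in terms of aisles and in terms of coaisles, are equivalent, again because by Lemma~\ref{lemma:t-str:aisle-coaisle-duality} an aisle determines its coaisle as a shifted right orthogonal, so that inclusions of aisles reverse into inclusions of coaisles.

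\emph{Step 1: the torsion pair.} The orthogonality $\Hom_{\mathcal D(A)}(\mathcal X,\mathcal Y)=0$ is immediate from axiom T2) for $\mathcal S$, since $\mathcal X\subseteq\mathcal S^{\leq0}$ and $\mathcal Y\subseteq\mathcal S^{\geq1}$. For the decomposition, I would take $H\in\mathcal H_{\mathcal R}$ and consider its approximating triangle with respect to $\mathcal S$, namely $\sigma_{\mathcal S}^{\leq0}H\to H\to\sigma_{\mathcal S}^{\geq1}H\to(\sigma_{\mathcal S}^{\leq0}H)[1]$. Since $\mathcal S^{\leq0}\subseteq\mathcal R^{\leq0}$ and $\mathcal S^{\geq1}\subseteq\mathcal R^{\geq0}$, the outer terms already lie in $\mathcal R^{\leq0}$ and $\mathcal R^{\geq0}$ respectively; the remaining vanishings of $\mathcal R$-cohomology are obtained by running the long exact $\mathcal H_{\mathcal R}$-cohomology sequence of this triangle (Theorem~\ref{theorem:t-str:fundamental}(2)) and using that $H_{\mathcal R}^i(H)=0$ for $i\neq0$: one reads off $H_{\mathcal R}^i(\sigma_{\mathcal S}^{\leq0}H)=0$ for $i<0$ and $H_{\mathcal R}^i(\sigma_{\mathcal S}^{\geq1}H)=0$ for $i>0$. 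Hence all three terms of the triangle lie in $\mathcal H_{\mathcal R}$, so by Theorem~\ref{theorem:t-str:fundamental}(1) we get a short exact sequence $0\to\sigma_{\mathcal S}^{\leq0}H\to H\to\sigma_{\mathcal S}^{\geq1}H\to0$ in $\mathcal H_{\mathcal R}$, with first term in $\mathcal X$ and last term in $\mathcal Y$; together with the $\Hom$-vanishing this is the standard criterion for $(\mathcal X,\mathcal Y)$ to be a torsion pair. The reformulations $\mathcal X=\mathcal R^{\geq0}\cap\mathcal S^{\leq0}$ and $\mathcal Y=\mathcal R^{\leq0}\cap\mathcal S^{\geq1}$ are then purely formal: using $\mathcal S^{\leq0}\subseteq\mathcal R^{\leq0}$ (resp. $\mathcal S^{\geq1}\subseteq\mathcal R^{\geq0}$) one of the two intersected conditions defining $\mathcal X$ (resp. $\mathcal Y$) becomes redundant.

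\emph{Steps 2 and 3: the aisle and coaisle formulas.} For an arbitrary complex $X^\bullet$ I would use the $\mathcal R$-truncation triangle $\sigma_{\mathcal R}^{\leq-1}X^\bullet\to X^\bullet\to\sigma_{\mathcal R}^{\geq0}X^\bullet\to(\sigma_{\mathcal R}^{\leq-1}X^\bullet)[1]$. If $X^\bullet\in\mathcal R^{\leq0}$ then $\sigma_{\mathcal R}^{\geq0}X^\bullet\simeq H_{\mathcal R}^0(X^\bullet)$ while $\sigma_{\mathcal R}^{\leq-1}X^\bullet\in\mathcal R^{\leq-1}\subseteq\mathcal S^{\leq0}$; by the extension-closure of $\mathcal S^{\leq0}$ the triangle then shows that $X^\bullet\in\mathcal S^{\leq0}$ if and only if $H_{\mathcal R}^0(X^\bullet)\in\mathcal S^{\leq0}$, i.e. (as $H_{\mathcal R}^0(X^\bullet)$ is automatically in $\mathcal H_{\mathcal R}$) if and only if $H_{\mathcal R}^0(X^\bullet)\in\mathcal X$; since moreover $\mathcal S^{\leq0}\subseteq\mathcal R^{\leq0}$, this is exactly the claimed description of $\mathcal S^{\leq0}$. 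Dually, if $X^\bullet\in\mathcal R^{\geq-1}$ then $\sigma_{\mathcal R}^{\leq-1}X^\bullet\simeq H_{\mathcal R}^{-1}(X^\bullet)[1]$ and $\sigma_{\mathcal R}^{\geq0}X^\bullet\in\mathcal R^{\geq0}\subseteq\mathcal S^{\geq0}$, so by the extension-closure of $\mathcal S^{\geq0}$ and the identity $\mathcal S^{\geq1}[1]=\mathcal S^{\geq0}$ the triangle shows $X^\bullet\in\mathcal S^{\geq0}$ if and only if $H_{\mathcal R}^{-1}(X^\bullet)\in\mathcal S^{\geq1}$, that is, if and only if $H_{\mathcal R}^{-1}(X^\bullet)\in\mathcal Y$, which is the claimed description of $\mathcal S^{\geq0}$. (Alternatively, the coaisle formula can be deduced from the aisle formula through Lemma~\ref{lemma:t-str:aisle-coaisle-duality}.)

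\emph{Main obstacle.} Apart from Step~1, everything is formal manipulation of distinguished triangles together with the extension-closure of (co)aisles. The one genuinely substantive point is showing that the $\mathcal S$-truncations of an object of $\mathcal H_{\mathcal R}$ remain in $\mathcal H_{\mathcal R}$; this is where the two-sided sandwich $\mathcal R^{\leq-1}\subseteq\mathcal S^{\leq0}\subseteq\mathcal R^{\leq0}$ is actually needed (a one-sided inclusion would not force the truncations back into the heart), and it is the only place where the long exact cohomology sequence intervenes.
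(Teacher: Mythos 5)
Your proof is correct, and since the paper does not reproduce a proof but simply cites Polishchuk, I will compare against the standard argument (which is also Polishchuk's). Your argument is exactly it: obtain the torsion pair in $\mathcal H_{\mathcal R}$ by $\mathcal S$-truncating an object of the heart and using the long exact $\mathcal R$-cohomology sequence to show both truncations land back in $\mathcal H_{\mathcal R}$, then recover the formulas for $\mathcal S^{\leq 0}$ and $\mathcal S^{\geq 0}$ from the $\mathcal R$-truncation triangle and the closure of (co)aisles under extensions and cones/cocones. All the small verifications check out (the vanishing patterns in the long exact sequence, the identification $\sigma_{\mathcal R}^{\leq -1}X^\bullet \simeq H^{-1}_{\mathcal R}(X^\bullet)[1]$ when $X^\bullet\in\mathcal R^{\geq -1}$, and the equivalence of the aisle/coaisle forms of the hypothesis via Lemma~\ref{lemma:t-str:aisle-coaisle-duality}), and you correctly identified that the only non-formal input is the two-sided sandwich forcing $\mathcal S$-truncations of heart objects back into $\mathcal H_{\mathcal R}$. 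Nothing to add.
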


\begin{remark}
	It can be proved without too much effort that in our case the torsion pair
	$(\mathcal{X},\mathcal{Y})$ in $\mathcal{H}_\mathcal{D}\simeq A\text{-Mod}$ so identified is exactly the pair $(KE_0,KE_1)$
	induced by the 1-tilting module $T$; this confirms Remark
	\ref{remark:zero-cohomology}.
\end{remark}

We would like to use a procedure analogous to the tilting of Proposition
\ref{proposition:ntil:t-str-tilting} in order to link $\mathcal{D}$ and $\mathcal{T}$ in the $n>1$ case.
Notice that if we repeat this tilting operation $n$ times, the first and last of the produced
$t$-structures will be related by the inclusions of Proposition \ref{proposition:ntil:inclusions}. Indeed,
let $\mathcal{R}_0,\dots,\mathcal{R}_n$ be $t$-structures such that $\mathcal{R}_i$ is obtained by
tilting $\mathcal{R}_{i-1}$ with respect to some torsion pair on $\mathcal{H}_{\mathcal{R}_{i-1}}$,
for every $i=1,\dots,n$. Then, we have by construction
\[\mathcal{R}_0^{\leq-1}\subseteq\mathcal{R}_1^{\leq0}\subseteq\mathcal{R}_0^{\leq0}\quad\text{and}\quad
	\mathcal{R}_1^{\leq-1}\subseteq\mathcal{R}_2^{\leq0}\subseteq\mathcal{R}_1^{\leq0}\]
which combined give
\[\mathcal{R}_0^{\leq-2}\subseteq\mathcal{R}_2^{\leq0}\subseteq\mathcal{R}_0^{\leq0}.\]
One can then clearly prove by induction that
\[\mathcal{R}_0^{\leq-n}\subseteq\mathcal{R}_n^{\leq0}\subseteq\mathcal{R}_0^{\leq0}.\]

If $T$ is $n$-tilting, we shall show that the associated $t$-structure $\mathcal{T}$ can
indeed be constructed from $\mathcal{D}$ with this iterated procedure.
To do so, we are going to
construct the ``intermediate'' $t$-structures produced after each tilting.

For $i=0,\dots,n$, consider the strict full subcategories $\mathcal{D}_i^\geq =
\mathcal{D}^{\geq-i}\cap\mathcal{T}^{\geq0}$ (notice that we are working with the coaisles). We have as wanted that
\[\mathcal{D}^{\geq0}=\mathcal{D}_0^\geq\subseteq\mathcal{D}_1^\geq\subseteq\cdots\subseteq\mathcal{D}_n^\geq=\mathcal{T}^{\geq0}\]
and $\mathcal{D}_{i-1}^\geq\subseteq\mathcal{D}_i^\geq\subseteq\mathcal{D}_{i-1}^\geq[1]$ for $i=1,\dots,n$.
The only thing needed to proceed with an iterated application of Proposition
\ref{proposition:ntil:t-str-tilting} is to prove
that these $\mathcal{D}_i^\geq$ are indeed the coaisle of some $t$-structure, for $i=1,\dots,n-1$.

\begin{lemma}\label{lemma:mattiello}
	The $\mathcal{D}_i^\geq = \mathcal{D}^{\geq -i}\cap \mathcal{T}^{\geq 0}$ are coaisles of
	$t$-structures.
\end{lemma}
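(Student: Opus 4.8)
The plan is to exhibit each $\mathcal{D}_i^\geq$ as the coaisle of a $t$-structure generated, in the sense of Proposition~\ref{prop:t-str:susp}, by a single complex, namely $E_i:=A[i]\oplus T$.

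First I would recall the two descriptions in play. By definition (see also Remark~\ref{rmk:T-gen}), $\mathcal{T}^{\geq0}=\{Y^\bullet\in\mathcal{D}(A):\Hom_{\mathcal{D}(A)}(T,Y^\bullet[j])=0\text{ for every }j<0\}$. For the other factor, by Remark~\ref{rmk:d-gen} the natural $t$-structure $\mathcal{D}$ is the one generated by $A$; applying the autoequivalence $[i]$ (which commutes with the formation of the smallest cocomplete pre-aisle), $\mathcal{D}^{\geq-i}=\mathcal{D}^{\geq0}[i]$ is the coaisle of the $t$-structure generated by $A[i]$, so by Proposition~\ref{prop:t-str:susp}
\[\mathcal{D}^{\geq-i}=\{Y^\bullet\in\mathcal{D}(A):\Hom_{\mathcal{D}(A)}(A[i],Y^\bullet[j])=0\text{ for every }j<0\}\]
(concretely, since $A$ is a projective generator one has $\Hom_{\mathcal{D}(A)}(A[i],Y^\bullet[j])\cong H^{j-i}(Y^\bullet)$, so this is just the condition $H^m(Y^\bullet)=0$ for $m<-i$).

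The key step is then the elementary observation that $\Hom_{\mathcal{D}(A)}(A[i]\oplus T,-)\cong\Hom_{\mathcal{D}(A)}(A[i],-)\oplus\Hom_{\mathcal{D}(A)}(T,-)$, so that $\Hom_{\mathcal{D}(A)}(E_i,Y^\bullet[j])=0$ for every $j<0$ if and only if $Y^\bullet$ lies in both $\mathcal{D}^{\geq-i}$ and $\mathcal{T}^{\geq0}$, i.e. if and only if $Y^\bullet\in\mathcal{D}_i^\geq$. Hence
\[\mathcal{D}_i^\geq=\{Y^\bullet\in\mathcal{D}(A):\Hom_{\mathcal{D}(A)}(E_i,Y^\bullet[j])=0\text{ for every }j<0\},\]
and Proposition~\ref{prop:t-str:susp}, applied to the complex $E_i$, identifies the right-hand side with the coaisle of the $t$-structure whose aisle is the smallest cocomplete pre-aisle of $\mathcal{D}(A)$ containing $E_i$. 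This shows $\mathcal{D}_i^\geq$ is a coaisle. (For $i=0$ and $i=n$ one simply recovers $\mathcal{D}^{\geq0}$ and $\mathcal{T}^{\geq0}$, using the inclusions $\mathcal{D}^{\geq0}\subseteq\mathcal{T}^{\geq0}\subseteq\mathcal{D}^{\geq-n}$ of Proposition~\ref{proposition:ntil:inclusions}.)

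I do not expect a genuine obstacle here: the whole content is the recognition that an intersection of two \emph{generated} coaisles is again generated, by the direct sum of the two generating complexes. The only place to be careful is the bookkeeping of the shift — that $\mathcal{D}^{\geq-i}$ is generated by $A[i]$ rather than $A[-i]$, equivalently that $\Hom_{\mathcal{D}(A)}(A[i],Y^\bullet[j])\cong H^{j-i}(Y^\bullet)$ vanishes for all $j<0$ exactly on $\mathcal{D}^{\geq-i}$.
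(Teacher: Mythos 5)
Your proof is correct and is essentially the paper's own argument: both identify $\mathcal{D}^{\geq-i}$ and $\mathcal{T}^{\geq0}$ as the coaisles generated (in the sense of Proposition~\ref{prop:t-str:susp}) by $A[i]$ and $T$ respectively, and then observe that the intersection is the coaisle generated by $A[i]\oplus T$. The only addition in your write-up is the explicit unwinding $\Hom_{\mathcal{D}(A)}(A[i],Y^\bullet[j])\cong H^{j-i}(Y^\bullet)$, which the paper leaves implicit.
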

\begin{proof}
	As we noticed before (see Remark \ref{rmk:d-gen} and the definition of $\mathcal{T}$), we have
	\begin{align*}
		\mathcal{D}^{\geq -i} &=
		\left\{Y^\bullet\in\mathcal{D}(A):\;\Hom_{\mathcal{D}(A)}(A[i],Y^\bullet[j])=0
		\text{ for every } j<0\right\} \\
		\mathcal{T}^{\geq 0} &=
		\left\{Y^\bullet\in\mathcal{D}(A):\;\Hom_{\mathcal{D}(A)}(T,Y^\bullet[j])=0
		\text{ for every } j<0\right\}.
	\end{align*}
	Hence, we have
	\[\mathcal{D}^{\geq -i}\cap\mathcal{T}^{\geq0} =
	\left\{Y^\bullet\in\mathcal{D}(A):\;\Hom_{\mathcal{D}(A)}(T\oplus A[i]),Y^\bullet[j])=0
	\text{ for every } j<0\right\}\]
	which is the coaisle of the $t$-structure generated by $T\oplus A[i]$ in the sense of Proposition
	\ref{prop:t-str:susp}.
\end{proof}

%
%
%

This concludes our previous discussion, making sure that $\mathcal{T}$ can be constructed from
$\mathcal{D}$ with (at most) $n$ applications of the procedure of tilting a $t$-structure with respect
to a torsion pair on its heart.

\section{The t-tree}\label{sec:5}

We are now going to exploit this fact to solve our decomposition problem.

First, we characterise the torsion pairs involved. According to Proposition
\ref{proposition:ntil:t-str-tilting}, at the $i$-th step the $t$-structure $\mathcal{D}_i$ (having coaisle
$\mathcal{D}_i^{\geq0}=\mathcal{D}_i^\geq=\mathcal{D}^{\geq-i}\cap\mathcal{T}^{\geq0}$) is tilted with
respect to the torsion pair
$(\mathcal{X}_i,\mathcal{Y}_i)=(\mathcal{D}_i^{\geq0}\cap\mathcal{D}_{i+1}^{\leq0},
\mathcal{D}_i^{\leq0}\cap\mathcal{D}_{i+1}^{\geq1})$ in the heart $\mathcal{H}_i$ of
$\mathcal{D}_i$, $i=0,\dots,n-1$, thus producing the $t$-structure $\mathcal {D}_{i+1}$.

Now let $X$ be a left $A$-module. As always, we may regard it as a complex concentrated in degree 0, $X[0]$ in
the heart $\mathcal{H}_\mathcal{D}=\mathcal{H}_0$. The first torsion pair
$(\mathcal{X}_0,\mathcal{Y}_0)$ provides then a decomposition
\[ \xymatrix{X_0\ar@{^{(}->}[r]&X\ar@{->>}[r]&X_1}\qquad\text{in }\mathcal{H}_0 \]
with $X_0\in\mathcal{X}_0, X_1\in\mathcal{Y}_0$. Notice that by construction
$\mathcal{X}_0\subseteq\mathcal{H}_1$ and $\mathcal{Y}_0\subseteq\mathcal{H}_1[-1]$ (see Proposition
\ref{proposition:ntil:t-str-tilting});
this means that we can use $(\mathcal{X}_1,\mathcal{Y}_1)$ and
$(\mathcal{X}_1[-1],\mathcal{Y}_1[-1])$ to further
decompose $X_0$ and $X_1$ respectively, obtaining:
\[
	\xymatrix@R1pc@C.5pc{&&&X\ar@{->>}[rrd]\\
		  &X_0\ar@{^{(}->}[rru]\ar@{->>}[rd]&&&&X_1\ar@{->>}[rd]\\
		  X_{00}\ar@{^{(}->}[ru]&&X_{01}&&X_{10}\ar@{^{(}->}[ru]&&X_{11} }
\]
with the exact sequences in the respective abelian categories:
\[ \vcenter{\xymatrix@R1pc@C.5pc{&&X\ar@{->>}[drr]\\ X_0\ar@{^{(}->}[urr]&\quad&&\quad&X_1
	}}\quad \text{in }\mathcal{H}_0\]
\[ \vcenter{\xymatrix@R1pc@C.5pc{&X_0\ar@{->>}[dr]\\ X_{00}\ar@{^{(}->}[ur]&&X_{01}\\
	}}\quad \text{in }\mathcal{H}_1\qquad\qquad
   \vcenter{\xymatrix@R1pc@C.5pc{&X_1\ar@{->>}[dr]\\ X_{10}\ar@{^{(}->}[ur]&&X_{11}
	}}\quad \text{in }\mathcal{H}_1[-1]\]
Now, notice again that since $\mathcal{X}_1\subseteq\mathcal{H}_2$ and
$\mathcal{Y}_1\subseteq\mathcal{H}_2[-1]$, we have that $X_{00}\in\mathcal{H}_2$,
$X_{01},X_{10}\in\mathcal{H}_2[-(0+1)]=\mathcal{H}_2[-(1+0)]$ and $X_{11}\in\mathcal{H}_2[-(1+1)]$.

By induction, by decomposing each $X_{b_1\dots b_i}$ with respect to the torsion pair
$(\mathcal{X}_i[-(b_1+\dots+b_i)],\mathcal{Y}_i[-(b_1+\dots +b_i)])$ in  $\mathcal{H}_i[-(b_1+\dots+b_i)]$ we obtain objects
$X_{b_1\dots b_i0}\in\mathcal{H}_{i+1}[-(b_1+\dots+b_i)]$ and $X_{b_1\dots
b_i1}\in\mathcal{H}_{i+1}[-(b_1+\dots+b_i+1)]$.

After $n$ steps, we obtain the complete diagram
\[
\xymatrix@R1pc@C.5pc{
	&&&&X\ar@{->>}[drr]\\
	&&X_0\ar@{^{(}->}[urr]\ar@{->>}[dr]&&&&X_1\ar@{->>}[dr]\\
	&\cdots\ar@{^{(}->}[ur]\ar@{->>}[dr]&&\cdots&&\cdots\ar@{^{(}->}[ur]&&\cdots\ar@{->>}[dr]\\
	X_{0\cdots00}\ar@{^{(}->}[ur]&&X_{0\cdots01}&&\cdots&&X_{1\cdots10}\ar@{^{(}->}[ur]&&X_{1\cdots11}
	}
\]
with $n+1$ rows, which is called the \emph{t-tree} of $X$ with respect to the $t$-structure induced by the tilting
module $T$.

We claim that this construction in some sense solves our decomposition problem. Indeed, by
construction each object $X_{b_1\cdots b_n}$ in the last row (called a \emph{t-leaf}) belongs to
$\mathcal{H}_{n}[-(b_1+\cdots+b_n)]=\mathcal{H}_\mathcal{T}[-(b_1+\cdots+b_n)]$: as noted in Remark
\ref{remark:MiyashitaHearts} these shifted hearts are extensions of the Miyashita classes: $KE_{b_1+\cdots+b_n}=A\text{-Mod}\cap \mathcal{H}_\mathcal{T}[-(b_1+\cdots+b_n)]$.
Moreover, these shifted hearts are obtained by adding only non-module objects (i.e., objects of $\mathcal{D}(A)$
outside of $\mathcal{H}_\mathcal{D}$) to the corresponding Miyashita class; for this reason they are less artificial than other
enlargments, and instead shed a new light on the Miyashita classes. The latter can indeed be
regarded as the piece of the shifted hearts of $\mathcal{T}$ visible in the category of modules.

\begin{example}\label{ex:ttree}
	We recall one last time the situation considered in Example \ref{ex:2tilt} to show an application of the
	construction of the t-tree; we will do it for the simple module
	${\begin{smallmatrix}2\end{smallmatrix}}$ again.

	First, a computation shows that the indecomposable complexes in $\mathcal{D}(A)$ are
	(shifts of):
	\[ \left\{{\begin{smallmatrix}1\end{smallmatrix}}, {\begin{smallmatrix}2\end{smallmatrix}},
		{\begin{smallmatrix}3\end{smallmatrix}}, {\begin{smallmatrix}1\\2\end{smallmatrix}},
		{\begin{smallmatrix}2\\3\end{smallmatrix}},
		{\begin{smallmatrix}2\\3\end{smallmatrix}}\to{\begin{smallmatrix}1\\2\end{smallmatrix}}
			\right\}. \]
	Since we know that $\mathcal{D}^{\geq 0}\subseteq\mathcal{T}^{\geq 0}$, any bounded below
	complex will belong to $\mathcal{T}^{\geq0}$, up to shifiting it enough to the right. We can
	then check for each of the indecomposable complexes what is their leftmost shift which still
	belongs to $\mathcal{T}^{\geq0}$; with an easy computation, the following is the result:
	\[\mathcal{T}^{\geq 0} =\langle
		{\begin{smallmatrix}1\end{smallmatrix}}, {\begin{smallmatrix}2\end{smallmatrix}},
		{\begin{smallmatrix}3\end{smallmatrix}}[2], {\begin{smallmatrix}1\\2\end{smallmatrix}},
		{\begin{smallmatrix}2\\3\end{smallmatrix}},
		{\begin{smallmatrix}2\\3\end{smallmatrix}}\to
			\overset{\bullet}{\begin{smallmatrix}1\\2\end{smallmatrix}}
		\rangle \]
	where the dot over a complex indicates its degree 0. The angle brackets will be used to
	denote the closure under direct sums and negative shifts.

	Following the construction, we can compute the intermediate coaisles:
	\begin{align*}
	\mathcal{D}_0^{\geq0} &= \langle
		{\begin{smallmatrix}1\end{smallmatrix}}, {\begin{smallmatrix}2\end{smallmatrix}},
		{\begin{smallmatrix}3\end{smallmatrix}}, {\begin{smallmatrix}1\\2\end{smallmatrix}},
		{\begin{smallmatrix}2\\3\end{smallmatrix}},
		\overset{\bullet}{\begin{smallmatrix}2\\3\end{smallmatrix}}\to
			{\begin{smallmatrix}1\\2\end{smallmatrix}} \rangle = \mathcal{D}^{\geq0} \\
	\mathcal{D}_1^{\geq0} &= \langle
		{\begin{smallmatrix}1\end{smallmatrix}}, {\begin{smallmatrix}2\end{smallmatrix}},
		{\begin{smallmatrix}3[1]\end{smallmatrix}}, {\begin{smallmatrix}1\\2\end{smallmatrix}},
		{\begin{smallmatrix}2\\3\end{smallmatrix}},
		{\begin{smallmatrix}2\\3\end{smallmatrix}} \to
			\overset{\bullet}{\begin{smallmatrix}1\\2\end{smallmatrix}} \rangle\\
	\mathcal{D}_2^{\geq0} &= \langle
		{\begin{smallmatrix}1\end{smallmatrix}}, {\begin{smallmatrix}2\end{smallmatrix}},
		{\begin{smallmatrix}3\end{smallmatrix}}[2], {\begin{smallmatrix}1\\2\end{smallmatrix}},
		{\begin{smallmatrix}2\\3\end{smallmatrix}},
		{\begin{smallmatrix}2\\3\end{smallmatrix}}\to
			\overset{\bullet}{\begin{smallmatrix}1\\2\end{smallmatrix}} \rangle =
				\mathcal{T}^{\geq0}.
	\end{align*}
	Now we compute the hearts of the respective $t$-structures: to do this, we use Lemma
	\ref{lemma:t-str:aisle-coaisle-duality}. An object $X$ of $\mathcal{D}_i^{\geq0}$ will belong to
	$\mathcal{H}_i$ if and only if $\Hom_{\mathcal{D}(A)}(X,Y)=0$ for every
	$Y\in\mathcal{D}_i^{\geq 1}=\mathcal{D}_i^{\geq0}[-1]$. In particular, it is easy to see
	that we must look for objects of the heart only among the ``leftmost shifts'' we have
	listed. The resulting computation gives (only indecomposable objects are listed):
	\begin{align*}
	\mathcal{H}_0 &= \left\{
		{\begin{smallmatrix}1\end{smallmatrix}}, {\begin{smallmatrix}2\end{smallmatrix}},
		{\begin{smallmatrix}3\end{smallmatrix}}, {\begin{smallmatrix}1\\2\end{smallmatrix}},
		{\begin{smallmatrix}2\\3\end{smallmatrix}} \right\} = \mathcal{H}_\mathcal{D} = A\text{-Mod}\\
	\mathcal{H}_1 &= \left\{
		{\begin{smallmatrix}1\end{smallmatrix}}, {\begin{smallmatrix}2\end{smallmatrix}},
		{\begin{smallmatrix}3\end{smallmatrix}}[1], {\begin{smallmatrix}1\\2\end{smallmatrix}},
		{\begin{smallmatrix}2\\3\end{smallmatrix}},
		{\begin{smallmatrix}2\\3\end{smallmatrix}} \to
			\overset{\bullet}{\begin{smallmatrix}1\\2\end{smallmatrix}} \right\}\\
	\mathcal{H}_2 &= \left\{
		{\begin{smallmatrix}1\end{smallmatrix}}, {\begin{smallmatrix}3\end{smallmatrix}}[2],
		{\begin{smallmatrix}1\\2\end{smallmatrix}}, {\begin{smallmatrix}2\\3\end{smallmatrix}},
		{\begin{smallmatrix}2\\3\end{smallmatrix}} \to
			\overset{\bullet}{\begin{smallmatrix}1\\2\end{smallmatrix}}\right\}=\mathcal{H}_\mathcal{T}.
	\end{align*}
	Notice that neither ${\begin{smallmatrix}2\end{smallmatrix}}$ nor its shifts belong to
		$\mathcal{H}_\mathcal{T}$,
	which means exactly that it does not belong to any Miyashita class.

	Lastly, we can compute the torsion pairs $(\mathcal{X}_i,\mathcal{Y}_i)$ in $\mathcal{H}_i$,
	for $i=0,1$. We have:
	\begin{align*}
	\mathcal{X}_0 &= \mathcal{H}_0\cap\mathcal{H}_1 = \left\{
		{\begin{smallmatrix}1\end{smallmatrix}}, {\begin{smallmatrix}2\end{smallmatrix}},
		{\begin{smallmatrix}1\\2\end{smallmatrix}}, {\begin{smallmatrix}2\\3\end{smallmatrix}}
			\right\}, \qquad
	\mathcal{Y}_0 = \mathcal{H}_0\cap\mathcal{H}_1[-1] = \left\{
		{\begin{smallmatrix}3\end{smallmatrix}}\right\} \\
	\mathcal{X}_1 &= \mathcal{H}_1\cap\mathcal{H}_2 = \left\{
		{\begin{smallmatrix}1\end{smallmatrix}}, {\begin{smallmatrix}1\\2\end{smallmatrix}},
		{\begin{smallmatrix}2\\3\end{smallmatrix}},
		{\begin{smallmatrix}2\\3\end{smallmatrix}}\to
			\overset{\bullet}{\begin{smallmatrix}1\\2\end{smallmatrix}} \right\}, \qquad
	\mathcal{Y}_1 = \mathcal{H}_1\cap\mathcal{H}_2[-1] = \left\{
		{\begin{smallmatrix}3\end{smallmatrix}}[1] \right\}.
	\end{align*}
	The t-tree for the module ${\begin{smallmatrix}2\end{smallmatrix}}$ is then:
	\[ \xymatrix@C1pc@R1pc{
		&&&{\begin{smallmatrix}2\end{smallmatrix}}\ar@{->>}[drr]\\
		&{\begin{smallmatrix}2\end{smallmatrix}}\ar@{^{(}->}[urr]\ar@{->>}[dr]
			&&&&0\ar@{->>}[dr]\\
		{\begin{smallmatrix}2\\3\end{smallmatrix}}\ar@{^{(}->}[ur]
			&&{\begin{smallmatrix}3\end{smallmatrix}}[1]
			&&0\ar@{^{(}->}[ur]&&0
		}\]
	where the bottom left exact sequence is that associated to the distinguished triangle
	\[\xymatrix{{\begin{smallmatrix}2\\3\end{smallmatrix}}\ar[r]&
				{\begin{smallmatrix}2\end{smallmatrix}}\ar[r]&
				{\begin{smallmatrix}3\end{smallmatrix}}[1]\ar[r]^-{+1}&
		}.\]
	Notice that this triangle can be shifted to become
	$\xymatrix@1{{\begin{smallmatrix}3\end{smallmatrix}}\ar[r]&
		{\begin{smallmatrix}2\\3\end{smallmatrix}}\ar[r]&
			{\begin{smallmatrix}2\end{smallmatrix}}\ar[r]^-{+1}&
		}$,
	which can be read as a short exact sequence of modules. This says that
	${\begin{smallmatrix}2\end{smallmatrix}}$ is realised as the cokernel of the monomorphism
	${\begin{smallmatrix}3\end{smallmatrix}}\to{\begin{smallmatrix}2\\3\end{smallmatrix}}$,
		which is what was found following Jensen, Madsen and Su in Example \ref{ex:JMS}.
\end{example}

\newpage
\hrule
\vskip.2truecm

F. Mattiello - Dipartimento di Matematica ``Tullio Levi-Civita'', Universit\`a degli studi di Padova, fran.mattiello@gmail.com
	      
S. Pavon - Dipartimento di Matematica ``Tullio Levi-Civita'', Universit\`a degli studi di Padova, sergio.pavon@math.unipd.it
                                         
A. Tonolo - Dipartimento di Scienze Statistiche, Universit\`a degli studi di Padova, alberto.tonolo@unipd.it
\vskip.2truecm
\hrule
\end{document}